\newtheorem{theorem}{Theorem}
\newtheorem{algorithm}[theorem]{Algorithm}
\newtheorem{definition}[theorem]{Definition}
\newtheorem{example}[theorem]{Example}
\newtheorem{lemma}[theorem]{Lemma}
\newtheorem{proposition}[theorem]{Proposition}
\newtheorem{remark}[theorem]{Remark}
\numberwithin{equation}{section}
\numberwithin{theorem}{section}
\newenvironment{proof}[1][Proof]{\noindent\textbf{#1.} }{ \rule{0.5em}{0.5em}}
\newcommand{\Ito}{It\^o}
\newcounter{kfoo}
\begin{document}

\title{ Comparison of continuous and discrete-time data-based modeling for hypoelliptic systems }

\author{Fei Lu\thanks{Department of Mathematics, University of
    California, Berkeley and Lawrence Berkeley National
    Laboratory. E-mail addresses: feilu@berkeley.edu (FL);
    chorin@math.berkeley.edu (AC)}, Kevin K.~Lin\thanks{School of
    Mathematical Sciences, University of Arizona.  E-mail address:
    klin@math.arizona.edu}, and Alexandre J.~Chorin$^*$}

\maketitle

\begin{abstract}
  We compare two approaches to the predictive modeling of dynamical
  systems from partial observations at discrete times.  The first is
  continuous in time, where one uses data to infer a model in the form
  of stochastic differential equations, which are then discretized for
  numerical solution. The second is discrete in time, 
  where one directly infers a discrete-time model in the form of a nonlinear autoregression moving average model.  The comparison is performed in a
  special case where the observations are known to have been obtained
  from a hypoelliptic stochastic differential equation.  We show that the
  discrete-time approach has better predictive skills, especially when
  the data are relatively sparse in time.  We discuss open questions as
  well as the broader significance of the results.
  
\end{abstract}

\textbf{Keywords:} Hypoellipticity; stochastic parametrization; Kramers
oscillator; statistical inference; discrete partial data; NARMA.

\section{Introduction}

We examine the problem of inferring predictive stochastic models for a
dynamical system, given partial observations of the system at a discrete sequence of times. This inference problem arises in
applications ranging from molecular dynamics to climate modeling (see,
e.g. \cite{FS01, GCF15} and references therein).  The observations may
come from a stochastic or a deterministic chaotic system.  This
inference process, often called stochastic parametrization, is useful
both for reducing computational cost by constructing effective
lower-dimensional models, and for making prediction possible when
fully-resolved measurements of initial data and/or a full model are not
available.

Typical approaches to stochastic parametrization begin by identifying a
continuous-time model, usually in the form of stochastic differential
equations (SDEs), then discretizing the resulting model to make
predictions.  One difficulty with this standard approach is that it
often leads to hypoelliptic systems~\cite{KCG15,
  MH13, PSW09}, in which the noise acts on a proper
subset of state space directions.  As we will explain, this degeneracy
can make parameter estimation for hypoelliptic systems particularly difficult~\cite{PSW09, Sor12, ST12}, makiing the resulting model a
poor predictor for the system at hand.

Recent work~\cite{CL15,LLC15} has shown that fully discrete-time approaches to stochastic
parametrization, in which one considers a discrete-time parametric model
and infers its parameters from data, have certain advantages over
continuous-time methods.  In this paper, we compare the standard,
continuous-time approach with a fully discrete-time approach, in a  
 special case where the observations are
known in advance to have been produced by a hypoelliptic system whose
form is known, and only some parameters remain to be inferred.  
We hope that this comparison, in a relatively simple and 
well-understood context, will clarify some of the advantages and
disadvantages of discrete-time modeling for dynamical systems.
We note
that our discussion here leaves in abeyance the question of what to do in cases
where much less is known about the origin of the data; in general, there
is no reason to believe that a given set of observations was generated
by any stochastic differential equation or by a Markovian model of any kind. 

A major difficulty in discrete modeling is the derivation of the structure, i.e. of the terms in the discrete-time model. We show that when the form of the differential equation giving rise to the data is known,
one can deduce possible terms for the discrete model, but not
necessarily the associated coefficients, from numerical schemes. Note
that the use of this idea places the discrete and continuous
models we compare on an equal footing, in that both approaches produce
models directly derived from the assumed form of the model.


\paragraph{Model and goals.}
The specific hypoelliptic stochastic differential equations we work with
have the form
\begin{eqnarray}
  dx_{t} &=&y_{t}~dt,  \label{Keq} \\
  dy_{t} &=&\big(-\gamma y_{t}-V^{\prime }(x_{t})\big)~dt+\sigma dB_{t}~,  \notag
\end{eqnarray}
where $B_{t}$ is a standard Wiener process.  When the potential $V$ is
quadratic, i.e.,
\begin{equation*}
  V(x)=\frac{\alpha }{2}x^{2}~,~~\alpha >0,
\end{equation*}
we get a linear Langevin equation.  When the potential has the form
\begin{equation*}
  V(x)=\frac{\beta }{4}x^{4}-\frac{\alpha }{2}x^{2}~,~~\alpha,\beta >0,
\end{equation*}
this is the Kramers oscillator~\cite{Kra40, SGH93, AI00,hum05}.  It
describes the motion of a particle in a double-well potential driven by
white noise, with $x_{t}$ and $y_{t}$ being the position and the
velocity of the particle; $\gamma>0$ is a damping constant. The white
noise represents the thermal fluctuations of a surrounding ``heat
bath'', the temperature of which is connected to $\gamma$ and $\sigma$
via the Einstein relation $T=\frac{\sigma^2}{2\gamma}$.  This system is
ergodic, with stationary density $p(x,y)\propto\exp \left(
-\frac{2\gamma }{\sigma ^{2}}\left( \frac{1}{2} y^{2}+V(x)\right)
\right)$.  It has multiple time scales and can be highly nonlinear, but
is simple enough to permit detailed numerical study. Parameter
estimation for this system is also rather well-studied~\cite{PSW09,
  ST12}.  These properties make Eq.~(\ref{Keq}) a natural example
for this paper.

One of our goals is to construct a model that can make short-time
forecasts of the evolution of the variable $x$ based on past
observations $\left\{ {x}_{nh}\right\} _{n=1}^{N}$, where $h>0$ is the
observation spacing, in the situation where the parameters $\gamma$,
$\alpha$, $\beta$, and $\sigma$ are unknown.  (The variable $y$ is not
observed, hence even when the parameters are known, the initial value of
$y$ is missing when one tries to solve the SDEs to make predictions.)
We also require that the constructed model be able to reproduce
long-term statistics of the data, e.g., marginals of the stationary
distribution.  In part, this is because the form of the model (either
continuous or discrete-time) is generally unknown, and
reproduction of long-term statistics provides a useful criterion for
selecting a particular model.  But even more important, in order for a
model to be useful for tasks like data assimilation and uncertainty
quantification, it must faithfully capture relevant statistics on time
scales ranging from the short term (on which
trajectory-wise forecasting is possible) to longer time scales.

Our main finding is that the discrete-time approach makes predictions as
reliably as the true system that gave rise to the data (which is
of course unknown in general), even for relatively large observation spacings,  while a continuous-time approach is only accurate when the observation
spacing $h$ is small, even in very low-dimensional examples such as ours.

\paragraph{Paper organization.}
We briefly review some basic facts about hypoelliptic systems in
Section~\ref{sect:bkgnd}, including the parameter estimation technique
we use to implement the continuous-time approach.  In
Section~\ref{sec3}, we discuss the discrete-time approach.
Section~\ref{sec4} presents numerical results, and in Section~\ref{sec5}
we summarize our findings and discuss broader implications of our
results.  For the convenience of the reader, we collect a number of
standard results about SDEs and their numerical solutions in the
Appendices.

\section{Brief review of the continuous-time approach}\label{sect:bkgnd}

\subsection{Inference for partially observed hypoelliptic systems}

Consider a stochastic differential equation of the form
\begin{equation}
  \label{eq2.1}
  \begin{array}{rcl}
    dX&=&f(X,Y)~dt \\
    dY &= &a(X,Y)~dt + b(X,Y)~dW_t~.\\
  \end{array}
\end{equation}
Observe that only the $Y$ equation is stochastically forced. Because of
  this, the second-order operator in the Fokker-Planck equation
\begin{equation}
  \label{eq:FP}
  \frac{\partial}{\partial t}p(x,y,t) = - \frac{\partial}{\partial x} [f(x,y)p(x,y,t)] - \frac{\partial}{\partial y} [a(x,y)p(x,y,t)] 
  + \frac{1}{2}\frac{\partial^2}{\partial y^2} [b^2(x,y)p(x,y,t)] 
\end{equation}
for the time evolution of probability densities is not elliptic.  This
means that without any further assumptions on Eq.~(\ref{eq2.1}), the
solutions of the Fokker-Planck equation, and hence the transition
probability associated with the SDE, might be singular in the $X$
direction.  Hypoellipticity is a condition that guarantees the
existence of smooth solutions for Eq.~(\ref{eq:FP}) despite this
degeneracy.  Roughly speaking, a system is hypoelliptic if the drift
terms (i.e., the vector fields $f(x,y)$ and $a(x,y)$) help to spread the
noise to all phase space directions, so that the system has a
nondegenerate transition density. Technically, hypoellipticity requires certain conditions involving the
  Lie brackets of drift and diffusion fields, known as H\"ormander's
  conditions \cite{Nua06}; when these conditions are satisfied, the
  system can be shown to possess smooth transition densities.

Our interest is in systems for which 
only discrete observations of $x$ are
available, and we use these observations to estimate the
parameters in the functions $f, a,b$.  While parameter estimation for
completely observed nondegenerate systems has been widely investigated (see
e.g. \cite{Rao99, Sor12}), and there has been recent progress
  toward parameter estimation for partially-observed nondegenerate
  systems~\cite{Jen14}, parameter estimation from discrete partial observations for hypoelliptic systems
 remains challenging.

There are three main categories of methods for parameter
estimation (see, e.g., the surveys \cite{Sor04}, and \cite{Sor12}):
\begin{enumerate}
\item Likelihood-type methods, where the likelihood is analytically or
  numerically approximated, or a likelihood-type function is constructed
  based on approximate equations.  These methods lead to maximum
  likelihood estimators (MLE).

\item Bayesian methods, in which one combines a prior with a likelihood,
  and one uses the posterior mean as estimator. Bayesian methods are
  important when the likelihood has multiple maxima.  However, suitable priors may not always be
  available.
  
\item Estimating function methods, or generalized moments methods, where
  estimators are found by estimating functions of parameters and
  observations.  These methods generalize likelihood-type methods,
    and are useful when transition densities (and hence likelihoods) are
 difficult to compute. Estimating functions can be constructed using associated martingales or moments.

\end{enumerate}
Because projections of Markov processes are typically not Markov, and
the system is hypoelliptic, all three of the
  above approaches face difficulties for systems like (\ref{Keq}): the
likelihood function is difficult to compute either analytically or numerically, because only partial observations are available, and likelihood-type
functions based on approximate equations often lead to biased
estimators\cite{Glo06,PSW09,ST12}.  There are also no easily
calculated martingales on which to base estimating
functions \cite{DS04}.

There are two special cases that have been well-studied.  When the
system is linear, the observed process is a continuous-time
autoregression process. Parameter estimation for this case is
well-understood, see, e.g., the review papers \cite{Bro01,BDY07}. When
the observations constitute an integrated diffusion (that is, $f(x,y)=y$
and the $Y$ equation is autonomous, so that $X$ is an integral of the
diffusion process $Y$), consistent, asymptotically normal estimators are
constructed in \cite{DS04} using prediction-based estimating functions,
and in \cite{Glo06} using a likelihood type method based on Euler
approximation. However, these approaches rely on the system being linear
or the unobserved process being autonomous, and are not adapted to
general hypoelliptic systems.

To our knowledge, for general hypoelliptic systems with discrete partial
observation, only Bayesian type methods \cite{PSW09} and a likelihood
type method {\cite{ST12}} have been proposed when $f(x,y)$ is such that
Eq.~(\ref{eq2.1}) can be written in the form of Eq.~(\ref{Keq}) by a
change of variables.  In \cite{PSW09} Euler and \Ito-Taylor
approximations are combined in a deterministic scan Gibbs sampler
alternating between parameters and missing data in the unobserved
variables.  The reason for combining Euler and \Ito-Taylor approximation
is that Euler approximation leads to underestimated MLE of diffusion but
is effective for drift estimation, whereas \Ito-Taylor expansion leads
to unbiased MLE of diffusion but is inappropriate for drift
estimation. In \cite{ST12} explicit consistent maximum likelihood-type
estimators are constructed. However, all these methods require the observation
spacing $h$ to be small and the number of observations $N$ to be large.
For example, the estimators in \cite{ST12} are only guaranteed to
converge if, as $N\to\infty$, $h\to0$ in
such a way that $Nh^{2}\rightarrow 0$ and $Nh\rightarrow \infty$.  In
practice, the observation spacing $h>0$ is fixed, and large biases have
been observed when $h$ is not sufficiently small \cite{PSW09,ST12}.  We
show in this paper that the bias can be so large that the prediction
from the estimated system may be unreliable.


\subsection{Continuous-time stochastic parametrization}
\label{sect:CT stochastic parametrization}

The continuous-time approach starts by proposing a parametric
  hypoelliptic system and estimating parameters in the system from
\emph{discrete partial observations}. In the present paper, the
form of the hypoelliptic system is assumed to be known. Based on the Euler scheme approximation of the second equation in the
system, Samson and Thieullen \cite{ST12} constructed the following
likelihood-type function, or ``contrast''
\begin{equation*}
L_{N}(\theta )=\sum_{n=1}^{N-3}\frac{3}{2}\frac{\left[ \hat{y}_{(n+2)h}-\hat{y}_{\left( n+1\right) h}+h(\gamma \hat{y}_{nh}+V^{\prime }(x_{nh}))\right]
^{2}}{h\sigma ^{2}}+\left( N-3\right) \log \sigma ^{2},
\end{equation*}%
where $\theta =\left( \gamma ,\beta ,\alpha ,\sigma ^{2}\right) $ and 
\begin{equation}
  \hat{y}_{n}=\frac{x_{\left( n+1\right) h}-x_{nh}}h.  \label{xBar}
\end{equation}%
Note that a shift in time in the drift term, i.e. the time index of
$\gamma\hat{y}_{nh}+V^{\prime }(x_{nh})$ is $nh$ instead of $(n+1)h$, is
introduced to avoid a $\sqrt{h}$ correlation between
$\hat{y}_{(n+2)h}-\hat{y}_{\left( n+1\right) h}$ and
$\gamma\hat{y}_{(n+1)h}+V^{\prime }(x_{(n+1)h})$.  Note also that there
is a weighting factor $\frac{3}{2}$ in the sum, because the maximum
likelihood estimator based on Euler approximation underestimates the
variance (see, e.g., \cite{Glo06,PSW09}).

 The estimator is the minimizer of the contrast
\begin{equation}
\hat{\theta}_{N}=\arg \min_{\theta }L_{N}(\theta ).  \label{MLE}
\end{equation}
The estimator $\hat{\theta}_{N}$ converges to the true parameter value
$\theta =\left( \gamma ,\beta ,\alpha ,\sigma ^{2}\right) $ under the
condition that $h\rightarrow 0$, $Nh\rightarrow \infty $ and
$Nh^{2}\rightarrow 0$. However, if $h$ is not small enough, the
estimator $\hat{\theta}_{N}$ can have a large bias (see in \cite{ST12}
and in the later sections), and the bias can be so large that the
estimated system may have dynamics very different from the true system, 
and its prediction becomes unreliable.

\vspace{2mm}
\noindent{\bf Remark 2.1} 
In the case $V'(x)=\alpha x$, the Langevin system $(\ref{Keq})$ is
linear. The process $\{x_t,t\geq 0\}$ is a continuous-time
autoregressive process of order two, and there are
various ways to estimate the parameters {\rm(}see the review {\rm
  \cite{Bro14})}, e.g., the likelihood method using a state-space
representation and a Kalman recursion {\rm\cite{Jon81}}, or methods for
fitting discrete-time ARMA models {\rm\cite{Phi59}}. However, none
of these approaches can be extended to nonlinear Langevin systems. In this section
we focus on methods that work for nonlinear systems.
\vspace{2mm}

Once the parameters have been estimated, one numerically solves the
estimated system to make predictions. In this paper, to make predictions
for time $t>Nh$ (where $N$ is the number of observations), we use the
initial condition $\left(x_{Nh},\hat{y}_{N}\right)$ in solving the estimated
system, with $\hat{y}_{N}$ being an estimate of $y_{Nh}$ based on observations $x$. Since the
system is stochastic, we use an ``ensemble forecasting'' method to make
predictions.  We start a number of trajectories from the same
initial condition, and evolve each member of this ensemble
independently.  The ensemble characterizes the possible motions of the
particle conditional on past observations, and the ensemble mean
provides a specific prediction.  For the purpose of short-term
prediction, the estimated system can be solved with small time steps,
hence a low order scheme such as the Euler scheme may be used.

However, in many practical applications, the true system is unknown \cite{CL15, LLC15}, and one has to validate the continuous-time model by its ability to reproduce the long-term statistics of data.  For this purpose, one has to compute the ergodic limits of the estimated system. The Euler scheme may be numerically
unstable when the system is not globally Lipschitz, and a better scheme
such as implicit Euler (see e.g.\cite{MSH02, Tal02, MST10}) or the
quasi-symplectic integrator \cite{MT07}, is needed. In our study, the
Euler scheme is numerically unstable, while the \Ito-Taylor scheme of
strong order 2.0 in (\ref{IT2xy}) produces long-term statistics close to
those produced by the implicit Euler scheme. We use the \Ito-Taylor
scheme, since it has the advantage of being explicit and
was used in \cite{PSW09}.

In summary, the continuous-time approach uses the following algorithm to
generate a forecasting ensemble of trajectories.
\begin{algorithm}[Continuous-time approach] \rm
With data $\left\{ x_{nh}\right\} _{n=1}^{N}$, \\
\begin{tabular}{rp{5.5in}}
Step 1. & Estimate the parameters using $(\ref{MLE})$;\\
Step 2. & Select a numerical scheme for the SDE, e.g. the \Ito-Taylor scheme in the appendix;  \\
Step 3. & Solve the SDE $\left( \ref{Keq}\right)$ with estimated parameters, using small time steps $dt$ and initial data $\left( x_{Nh},\frac{x_{Nh}-x_{Nh-h}}{h}\right) $, to generate the forecasting ensemble. \\
\end{tabular}
\end{algorithm}

\section{The discrete-time approach \label{sec3}}

\subsection{NARMA representation \label{sec_Narma}}

In the discrete-time approach, the goal is to infer a discrete-time predictive 
model for $x$ from the data.
Following \cite{CL15}, we choose a discrete-time system in the form
of a nonlinear autoregression moving average (NARMA) model of the following form:
\begin{eqnarray}
X_{n}&=&\Phi_{n}+\xi _{n}, \label{Narma} \\
 \Phi_{n}  &:=& \mu
+\sum_{j=1}^{p}a_{j}X_{n-j}+\sum_{k=1}^{r}b_{k}Q_{k}(X_{n-p:n-1},\xi
_{n-q:n-1})+\sum_{j=1}^{q}c_{j}\xi _{n-j} , \label{Phi} 
\end{eqnarray}%
where $p$ is the order of the autoregression, $q$ is the order of the
moving average, and the $Q_{k}$ are given nonlinear functions
(see below) of $\left( X_{n-p:n-1},\xi _{n-q:n-1}\right) $. Here
$\left\{ \xi _{n}\right\} $ is a sequence of i.i.d Gaussian random
variables with mean zero and variance $c_{0}^{2}$ (denoted by
$\mathcal{N}(0,c_{0}^{2})$). The numbers $p$, $q$, $r$, as well
  as the coefficients $a_j$, $b_j$, and $c_j$ are to be determined from
  data.

A main challenge in designing NARMA models is the choice of the
functions $Q_k$, a process we call ``structure selection'' or
``structure derivation''.  Good structure design leads to models that
fit data well and have good predictive capabilities.  Using too many
unnecessary terms, on the other hand, can lead to overfitting
 or inefficiency, while too few terms can lead to
  an ineffective model.
As before, we assume that a parametric family containing the true model
is known, and we show that suitable structures for NARMA can be derived
from numerical schemes for solving SDEs. We propose the following
practical criteria for structure selection: (i) the model should be
numerically stable; (ii) we select the model that makes the best
predictions (in practice, the predictions can be tested using the given
data.); (iii) the large-time statistics of the model should agree with
those of the data. These criteria are not sufficient to uniquely specify
a viable model, and we shall return to this issue when we discuss the numerical experiments.

Once the $Q_k$ have been chosen, the coefficients $\left( a_{j},b_{j},c_{j}\right)$ are estimated from
data using the following conditional likelihood method. Conditional on
$\xi _{1},\dots ,\xi _m$, the log-likelihood of $\left\{
X_{n}=x_{nh}\right\} _{n=m+1}^{N}$ is%
\begin{equation}
\label{log-likelihood}
L_{N}(\vartheta |\xi _{1},\dots ,\xi _{m})=\sum_{n=m+1}^{N}\frac{\left(
X_{n}-\Phi _{n}\right) ^{2}}{2c_{0}^{2}}+\frac{N-q}{2} \log c_{0}^{2},
\end{equation}%
where $m=\max\{p,q\}$ and $\vartheta =\left(
a_{j},b_{j},c_{j},c_{0}^{2}\right)$, and $\Phi_n$ is defined in
  Eq.~(\ref{Phi}).
%
%
The log-likelihood is computed as follows. Conditionally on given values
of $\{\xi _{1},\dots ,\xi _m\}$, one can compute $\Phi_{m+1}$ from data $
\left\{ X_{n}=x_{nh}\right\} _{n=1}^m$ using  Eq.~(\ref{Phi}). 
With the value of $\xi_{m+1}$ following from (\ref{Narma}), 
one can then compute $\Phi_{m+2}$. Repeating this recursive procedure, one obtains the values of $\{\Phi_{n}\}_{n=m+1}^{N}$ that are needed to evaluate the log-likelihood.
The estimator of the parameter $\vartheta
=\left( a_{j},b_{j},c_{j},c_{0}^{2}\right) $ is the minimizer of the
log-likelihood
\begin{equation*}
\hat{\vartheta}_{N}=\arg \min_{\vartheta }L_{N}(\vartheta |\xi _{1},\dots
,\xi _{m}).
\end{equation*}%
If the system is ergodic, the conditional maximum likelihood estimator
$\hat{\vartheta}_{N} $ can be proved to be consistent (see
e.g.\cite{And70,Ham94}), which means that it converges almost surely to the true parameter value as $N\to \infty$. Note that the estimator requires the
  values of $\xi_1,\cdots,\xi_m$, which is in general not available.
  But ergodicity implies that if $N$ is large, $\hat{\vartheta}_{N}$
  forgets about the values of $\xi_1,\cdots,\xi_m$ quickly anyway, and
  in practice, we can simply set $\xi_1 =\dots =\xi _m=0$. Also, in practice, we initialize the
optimization with $c_1=\dots= c_q=0$ and with the values of $(a_j,b_j)$
computed by least-squares.

Note that in the case $q=0$, the estimator is the same as the nonlinear
least-squares estimator. The noise sequence $\left\{ \xi _{n}\right\} $
does not have to be Gaussian for the conditional likelihood method to
work, so long as the expression in Eq.~(\ref{log-likelihood}) is
  adjusted accordingly.

In summary, the discrete-time approach uses the following algorithm to a
generate a forecasting ensemble.

\begin{algorithm}[Discrete-time approach] \rm
With data $\left\{ x_{nh}\right\} _{n=1}^{N}$,\\
\begin{tabular}{rp{5.5in}}
Step 1. & Find possible structures for NARMA;\\
Step 2. & Estimate the parameters in NARMA for each possible structure;\\
Step 3. & Select the structure that fits the data  best, in the sense that it reproduces best the long-term
  statistics and makes the best predictions;\\
Step 4. & Use the resulting model to generate a forecasting ensemble.\\
\end{tabular}
\end{algorithm}

\subsection{Structure derivation for the linear Langevin equation} \label{CAR}

The main difficulty in the discrete-time approach is the derivation of the
structure of the NARMA model. In this section we discuss how to derive this structure
from the SDEs, first in the linear case. 

For the linear Langevin equation, the
discrete-time system should be linear. Hence we set $r=0$ in (\ref{Narma})
and obtain an ARMA($p,q$) model.  
The linear Langevin equation
\begin{equation}
\left\{ \begin{aligned} dx&=ydt, \\ dy&=(-\gamma y - \alpha x)dt+\sigma
dB_t,\end{aligned}\right.  \label{OU}
\end{equation}%
can be solved analytically. The solution $x_{t}$ at discrete times satisfies
(see Appendix~\ref{solutionLLE}) 
\begin{equation}
x_{\left( n+2\right) h}=a_{1}x_{\left( n+1\right)
h}+a_{2}x_{nh}-a_{22}W_{n+1,1}+W_{n+2,1}+a_{12}W_{n+1,2},  \label{OUts}
\end{equation}
where $\left\{ W_{n,i}\right\} $ are defined in (\ref{Vi}), and
\begin{equation} \label{a1a2}
a_{1}=\mathrm{trace}(e^{\mathbf{A}h}), a_{2}= - e^{-\gamma h}, 
a_{ij}=\left( e^{\mathbf{A}h}\right) _{ij}, \text{for } \mathbf{A}=\left( 
\begin{array}{cc}
0 & 1 \\ 
-\alpha & -\gamma
\end{array}
\right).
\end{equation}

The process $\left\{ x_{nh}\right\} $ defined in Eq.~(\ref{OUts}) is,
strictly speaking, not an ARMA process (see Appendix \ref{section:arma} for all
relevant, standard definitions used in this section), because $\left\{ W_{n,1}\right\} _{n=1}^{\infty }$ and
$\left\{ W_{n,2}\right\} _{n=1}^{\infty }$ are not linearly dependent
and would require at least two independent noise sequences to represent,
while an ARMA process requires only one.  However, as the following
proposition shows, there is an ARMA process with the same distribution
as the process $\left\{ x_{nh}\right\} $.  Since the minimum
mean-square-error state predictor of a stationary Gaussian process
depends only on its autocovariance function (see, e.g., \cite[Chapter
  5]{BD91}), an ARMA process equal in distribution to the discrete-time
Langevin equation is what we need here.

\begin{proposition}
\label{prop} The ARMA$\left( 2,1\right) $ process
\begin{equation}
X_{n+2}=a_1 X_{n+1}+ a_2 X_{n}+W_{n}+\theta _{1}W_{n-1},  \label{OU_ARMA21}
\end{equation}
where $a_1,a_2$ are given in $(\ref{a1a2})$ and the $\left\{ W_{n}\right\} $ are i.i.d $\mathcal{N}(0,\sigma _{W}^{2})$, is the unique process in the family of invertible ARMA processes that has the same distribution as the process $\left\{ x_{nh}\right\} $.  Here $\sigma _{W}^{2}$ and $\theta _{1}$ ($\theta_1 <1$ so that the process is invertible) satisfy the equations 
\begin{eqnarray*}
\sigma _{W}^{2}\left( 1+\theta _{1}^{2}+\theta _{1}a_{1}\right)
&=&\gamma _{0}-\gamma _{1}a _{1}-\gamma _{2}a _{2}, \\
\sigma _{W}^{2}\theta _{1} &=&\gamma _{1}\left( 1-a _{2}\right) -\gamma
_{0}a _{1},
\end{eqnarray*}
where $\left\{\gamma _{j}\right\} _{j=0}^{2 }$ are the autocovariances of the process $\left\{ x_{nh}\right\} $ and are given in Lemma $\ref{eigen}$.
\end{proposition}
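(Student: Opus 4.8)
The plan is to exploit the fact that both $\{x_{nh}\}$ and the candidate ARMA$(2,1)$ process are zero-mean stationary Gaussian, so that equality in distribution is equivalent to equality of their autocovariance functions $\{\gamma_j\}_{j\ge 0}$. The whole problem is thus to match infinitely many autocovariances, and the first step is to reduce this to matching only $\gamma_0$ and $\gamma_1$. The reduction rests on the observation that both processes obey the \emph{same} second-order recursion: their autocovariances satisfy $\gamma_j=a_1\gamma_{j-1}+a_2\gamma_{j-2}$ for every lag $j\ge 2$. For the ARMA$(2,1)$ process this is the standard fact that autocovariances satisfy the AR recursion beyond lag $q=1$. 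For $\{x_{nh}\}$ it follows from Eq.~(\ref{OUts}): setting $m=n+2$, the one-step residual $\zeta_m:=x_{mh}-a_1x_{(m-1)h}-a_2x_{(m-2)h}=W_{m,1}-a_{22}W_{m-1,1}+a_{12}W_{m-1,2}$ involves only the noise at indices $m$ and $m-1$, which is independent of $x_{(m-j)h}$ for $j\ge 2$; multiplying Eq.~(\ref{OUts}) by $x_{(m-j)h}$ and taking expectations then removes the residual term. Since the two recursions coincide, if $\gamma_0$ and $\gamma_1$ agree then all higher autocovariances agree by induction, and the two processes are equal in distribution.

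Second, I would express $\gamma_0$ and $\gamma_1$ of the candidate process in terms of $(\sigma_W^2,\theta_1)$ through the Yule--Walker equations and check that these reproduce the two displayed relations. Writing the recursion as $X_m-a_1X_{m-1}-a_2X_{m-2}=V_m$ with $V_m=W_m+\theta_1 W_{m-1}$, so that $\mathrm{Var}(V_m)=\sigma_W^2(1+\theta_1^2)$ and $\mathrm{Cov}(V_m,V_{m-1})=\sigma_W^2\theta_1$, one uses $\mathrm{Cov}(V_m,X_{m-2})=0$ and $\mathrm{Cov}(V_m,X_{m-1})=\sigma_W^2\theta_1$ to get $\mathrm{Cov}(V_m,X_m)=\sigma_W^2(1+\theta_1^2+a_1\theta_1)$. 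Multiplying the recursion by $X_m$ and by $X_{m-1}$ and taking expectations then yields the lag-$0$ and lag-$1$ identities, which (using $\gamma_2=a_1\gamma_1+a_2\gamma_0$ where convenient) are precisely the two equations in the statement. This is routine algebra and I would not dwell on it.

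Finally I would settle existence and uniqueness of the invertible solution. Eliminating $\sigma_W^2$ from the two equations produces a quadratic in $\theta_1$ whose leading and constant coefficients are equal, so its roots are reciprocals; exactly one lies in $(-1,1)$, giving the invertible choice with $\sigma_W^2>0$. Equivalently---and more robustly---the residual $\zeta_m$ has autocovariance supported on lags $\{-1,0,1\}$, hence a nonnegative spectral density proportional to $c_0+2c_1\cos\omega$, where $c_0,c_1$ are the lag-$0$ and lag-$1$ autocovariances of $\zeta_m$; this factors uniquely as $\sigma_W^2\,|1+\theta_1 e^{-i\omega}|^2$ with $|\theta_1|<1$, the standard MA$(1)$ spectral factorization that furnishes the unique invertible single-noise representation of the driving sequence. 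Uniqueness within the invertible ARMA family then follows from standard ARMA identifiability: the AR coefficients are forced by the tail recursion of $\{\gamma_j\}$ (equivalently by the eigenvalues of $e^{\mathbf{A}h}$, which lie in the open unit disk because $\mathbf{A}$ is stable, so the process is causal), and given them the invertible MA$(1)$ factor is unique. I expect the genuine obstacles to be not the algebra but (a) verifying from the definition of $\{W_{n,i}\}$ in Eq.~(\ref{Vi}) that $\zeta_m$ indeed has covariance vanishing at lags $\ge 2$, which is what powers the whole reduction, and (b) checking strict positivity of the spectral density so that invertibility is strict ($|\theta_1|<1$) and invoking the identifiability theorem to upgrade ``matching autocovariances'' to ``the unique invertible ARMA process''.
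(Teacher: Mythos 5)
Your proposal is correct, and it shares the paper's starting point --- both processes are centered stationary Gaussian, so equality in distribution reduces to matching autocovariance functions --- as well as its endpoint, the Yule--Walker relations of Example \ref{exmpl21} that produce the two displayed equations. The middle of your argument, however, is genuinely different. The paper works from explicit formulas: it takes the autocovariance of $\left\{ x_{nh}\right\}$ from Lemma \ref{eigen}, compares it with the general solution $\gamma (n)=\sum_{i,j}\beta _{ij}n^{j}\zeta _{i}^{-n}$ of the ARMA difference equations from Appendix \ref{section:arma}, reads off $p=2$ and $\phi _{1}=\zeta _{1}^{-1}+\zeta _{2}^{-1}=a_{1}$, $\phi _{2}=-\zeta _{1}^{-1}\zeta _{2}^{-1}=a_{2}$ from the exponential modes, and then pins $q=1$ by noting that the recursion holds for all $k\geq 2$ but fails at lag $1$. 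You instead obtain the lag-$\geq 2$ recursion $\gamma _{j}=a_{1}\gamma _{j-1}+a_{2}\gamma _{j-2}$ structurally, from the $1$-dependence of the residual in Eq.~(\ref{OUts}) and its independence of the past (via (\ref{Vi})), which collapses the whole problem to matching $\gamma _{0}$ and $\gamma _{1}$; you then treat the moving-average part by MA($1$) spectral factorization. Your route buys something the paper leaves implicit: \emph{existence} of a solution of the two equations with $\sigma _{W}^{2}>0$ and $|\theta _{1}|<1$ (reciprocal roots of the quadratic, positivity of the residual spectrum), whereas the paper argues only uniqueness and never checks solvability in the invertible range; your argument also avoids the explicit eigenvalue formulas, so it extends more directly to sampled higher-order processes. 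What the paper's route buys is the identification $\zeta _{i}=e^{-\lambda _{i}h}$ of the AR roots with the SDE eigenvalues, which is exactly what the remark following Example \ref{th-value} uses to recover $(\gamma ,\alpha ,\sigma ^{2})$ from the ARMA estimators. The two proofs are at the same level of rigor at the remaining soft spots: both appeal to standard ARMA identifiability (coprimality of $\phi$ and $\theta$ forces all AR roots to appear in the autocovariance tail) without detail, and neither actually verifies $\gamma _{1}(1-a_{2})-\gamma _{0}a_{1}\neq 0$ (equivalently $\theta _{1}\neq 0$) --- the paper asserts it, and you candidly flag the analogous strict positivity of the residual spectrum as a step still to be checked.
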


\begin{proof}
 Since the stationary process $\left\{ x_{nh}\right\} $ is a centered
Gaussian process, we only need to find an ARMA$\left( p,q\right) $\ process
with the same autocovariance function as $\left\{ x_{nh}\right\} $. The
autocovariance function of $\left\{ x_{nh}\right\} $, denoted by $\left\{
\gamma _{n}\right\} _{n=0}^{\infty }$, is given by (see Lemma \ref{eigen})
\begin{equation*} 
\gamma _{n}=\gamma _{0}\times \left\{ 
\begin{array}{cc}
\frac{1}{\lambda _{1}-\lambda _{2}}(\lambda _{1}e^{\lambda _{2}nh}-\lambda
_{2}e^{\lambda _{1}nh}),~ & \text{if }\gamma ^{2}-4\alpha \neq 0; \\[1ex] 
e^{\lambda _{0}nh}(1-\lambda _{0}nh), & \text{if }\gamma ^{2}-4\alpha =0,%
\end{array}%
\right.
\end{equation*}%
where $(\lambda_1, \lambda_2, \text{ or } \lambda_0 )$ are the roots of the characteristic polynomial $\lambda^2+\gamma \lambda +\alpha=0$ of the matrix $\mathbf{A}$ in (\ref{a1a2}). 

On the other hand, the autocovariance function of an ARMA$\left( p,q\right)$
process
\begin{equation*}
X_{n}-\phi _{1}X_{n-1}-\cdots -\phi _{p}X_{n-p}=W_{n}+\theta
_{1}W_{n-1}+\cdots +\theta _{q}W_{n-q}, 
\end{equation*}%
denoted as $\left\{ \gamma \left( n\right) \right\}
_{n=0}^{\infty }$, is given by (see Eq.~(\ref{gamma_roots}))
\begin{equation*}
\gamma (n)=\sum_{i=1}^{k}\sum_{j=0}^{r_{i}-1}\beta _{ij}n^{j}\zeta _{i}^{-n}%
~\text{,~~~for }n\geq \max\{p,q+1\}-p, 
\end{equation*}%
where $\left( \zeta _{i},i=1,\dots ,k\right) $ are the distinct zeros of $\phi
(z):=1-\phi _{1}z-\cdots -\phi _{p}z^{p}$, and $r_{i}$ is the
multiplicity of $\zeta _{i}$ (hence $\sum_{i=1}^{k}r_{i}=p$), and $\{\beta _{ij}\}$ are constants.

Since $\left\{ \gamma_{n}\right\} _{n=0}^{\infty }$ only provides two possible roots, $\zeta
_{i}=e^{-\lambda _{i}h}$ or $\zeta _{i}=e^{-\lambda _{0}h}$ for $i=1,2$, the
order $p$ must be that $p=2$. From these two roots, one can compute the coefficients $\phi _{1}$ and $\phi _{2}$ in the ARMA$\left( 2,q\right) $ process:%
\begin{equation*}
\phi _{1}=\zeta _{1}^{-1}+\zeta _{2}^{-1}=\mathrm{trace}(e^{\mathbf{A}h})=a_1,~\phi
_{2}=-\zeta _{1}^{-1}\zeta _{2}^{-1}=-e^{-\gamma h}=a_2.
\end{equation*}%
Since $\gamma _{k}-\phi _{1}\gamma _{k-1}-\phi _{2}\gamma _{k-2}= 0$ for any $k\geq 2$, we have $q\leq1$. Since $\gamma _{1}-\phi _{1}\gamma _{0}-\phi _{2}\gamma _{1}\neq 0$, Example %
\ref{exmpl20} indicates that $q\neq 0$. Hence $q=1$ and the above ARMA($2,1$) is the unique process in the family of invertible ARMA($p,q$) processes that has the same distribution as $\left\{ x_{nh}\right\} $. The equations for $\sigma _{W}^{2}$ and $\theta _{1}$ follow from Example \ref{exmpl21}.  
\end{proof}

\bigskip
This proposition indicates that the discrete-time system for the linear
Langevin system should be an ARMA$\left( 2,1\right)$ model.

\begin{example} \label{th-value}
{\rm
Suppose $\Delta:=\gamma^2-4\alpha<0$. Then the parameters in the ARMA$(2,1)$ process $(\ref{OU_ARMA21})$ are given by $a_1=2e^{-\frac{\gamma}{2} h}\cos(\frac{\sqrt{-\Delta}}{2} h)$, 
$ a_2=-e^{-\gamma h}$ and
\begin{eqnarray*}
\theta_1 = \frac{c-a_1-\sqrt{(c-a_1)^2-4}}{2}, \ \ 
\sigma_w^2 = \frac{\gamma_1(1-a_2)-\gamma_0 a_1}{\theta_1}.
\end{eqnarray*}
where $c=\frac{\gamma_0-\gamma_1 a_1-\gamma_2 a_2}{\gamma_1(1-a_2)-\gamma_0 a_1}$, and $
\gamma_n= \frac{\sigma^2}{2\gamma \alpha} \left(\cos(\frac{\sqrt{-\Delta}}{2} nh) + \frac{\gamma}{\sqrt{-\Delta}} \sin(\frac{\sqrt{-\Delta}}{2} nh)\right)$
for $n\geq 0$. 
}
\end{example}

\begin{remark}
{\rm
  The maximum likelihood estimators of ARMA parameters can also be computed using a state-space representation and a Kalman recursion {\rm(}see e.g. {\rm\cite{BD91})}.  This approach is essentially the same as the conditional likelihood method in our discrete-time approach.
}  
\end{remark}

\begin{remark}{\rm The proposition indicates that the parameters in the linear Langevin equation can also be computed from the ARMA$(2,1)$ estimators, because from the proof we have $\gamma = -\frac{\ln (-a_2)}{h} =-\lambda_1-\lambda_2$, $\alpha = \lambda_1 \lambda_2$, and $\sigma^2=2\gamma\alpha \sigma_W^2$, where   $(\lambda_i, i=1, 2)$ satisfies that $(e^{-\lambda_i h}, i=1, 2)$ are the two roots of $\phi(z) = 1- a_1 z - a_2 z$. 
}\end{remark}

\subsection{Structure derivation for the Kramers oscillator}

For nonlinear Langevin systems, in general there is no analytical
solution, so the approach of Section~\ref{CAR}
  cannot be used.  Instead, we derive structures from the numerical
schemes for solving stochastic differential equations. For simplicity,
we choose to focus on explicit terms in a discrete-time system, so
implicit schemes (in e.g. \cite{MSH02, Tal02, MT07}) are not
suitable. Here we focus on deriving structures from two explicit
schemes: the Euler--Maruyama scheme and the \Ito-Taylor scheme of order
2.0; see Appendix~\ref{schemeLE} for a brief review of these schemes. As
mentioned before, we expect our approach to extend to other explicit
schemes, e.g., that of~\cite{AM11}. While we consider specifically
Eq.~(\ref{Keq}), the method used in this section extends to situations
when $f(x,y)$ is such that Eq.~(\ref{eq2.1}) can be rewritten in form
Eq.~(\ref{Keq}) and its higher-dimensional analogs by a change of
variables.

As warm-up, we begin with the Euler--Maruyama scheme. Applying
this scheme (\ref{EMxy}) to the system (\ref{Keq}), we find:
\begin{eqnarray*}
x_{n+1} &=&x_{n}+y_{n}h, \\
y_{n+1} &=&y_{n}(1-\gamma h)-hV^{\prime }(x_{n})+W_{n+1},
\end{eqnarray*}%
where $W_{n}=\sigma h^{1/2}\zeta _{n}$,$\,$\ with $\left\{ \zeta
_{n}\right\} $ is an i.id. sequence of $\mathcal{N}(0,1)$ random
variables. Straightforward substitutions yield a closed system for $x$\begin{equation*}
  x_{n}=(2-\gamma h)x_{n-1}-(1-\gamma h)x_{n-2}-h^{2}V^{\prime
  }(x_{n-2})+hW_{n-1}.
\end{equation*}%
Since $V^{\prime }(x)=\beta x^{3}-\alpha x$, this leads to the following
possible structure for NARMA: \\
\textbf{Model (M1):} \vspace{-3mm}
\begin{equation}
X_{n}=a_{1}X_{n-1}+a_{2}X_{n-2}+b_{1}X_{n-2}^{3}+\xi
_{n}+\sum_{j=1}^{q}c_{j}\xi _{n-j}+\mu .  \label{Narma1}
\end{equation}

Next, we derive a structure from the \Ito-Taylor scheme of order 2.0. Applying the scheme (\ref{IT2xy}) to the system (\ref{Keq}), we find
\begin{eqnarray*}
x_{n+1} &=&x_{n}+h\left( 1-0.5\gamma h\right) y_{n}- 0.5h^{2}V^{\prime }\left(
x_{n}\right) +Z_{n+1}, \\
y_{n+1} &=&y_{n}\left[ 1-\gamma h+ 0.5\gamma ^{2}h^{2}- 0.5h^{2}V^{\prime \prime
}\left( x_{n}\right) \right] -h\left( 1- 0.5\gamma h\right) V^{\prime }\left(
x_{n}\right) +W_{n+1}-\gamma Z_{n+1},
\end{eqnarray*}%
where $Z_{n}=\sigma h^{3/2}\left( \zeta _{n}+ \eta_n/\sqrt{3}\right) $, with $\left\{ \eta _{n}\right\} $ being an
i.id. $\mathcal{N}(0,1)$ sequence independent of $\left\{ \zeta
_{n}\right\} $. Straightforward substitutions yield a
closed system for $x:$
\begin{eqnarray*}
 x_{n} &=&x_{n-1}\left[ 2-\gamma h+ 0.5\gamma ^{2}h^{2}-h^{2}V^{\prime
     \prime }\left( x_{n-2}\right) \right] - 0.5h^{2}V^{\prime }\left(
 x_{n-1}\right) +Z_{n} \\
 &&+\left[ 1-\gamma h+ 0.5\gamma ^{2}h^{2}- 0.5h^{2}V^{\prime \prime }\left(
   x_{n-2}\right) \right] \left( -x_{n-2}+ 0.5h^{2}V^{\prime }\left(
 x_{n-2}\right) -Z_{n-1}\right) \\
 &&-h^{2}\left( 1- 0.5\gamma h\right) ^{2}V^{\prime }\left( x_{n-2}\right)
  +h\left( 1- 0.5\gamma h\right) \left( W_{n-1}-\gamma Z_{n-1}\right) .
\end{eqnarray*}%
Note that $W_{n}$ is of order $h^{1/2}$ and $Z_{n}$ is of order $h^{3/2}$.
Writing the terms in descending order, we obtain 
\begin{eqnarray}
x_{n} &=&\left( 2-\gamma h+ 0.5\gamma ^{2}h^{2}\right) x_{n-1}-\left( 1-\gamma
h+ 0.5\gamma ^{2}h^{2}\right) x_{n-2} \label{IT2_narma} \\
&&+Z_{n}-Z_{n-1}+h\left( 1- 0.5\gamma h\right) W_{n-1}- 0.5h^{2}V^{\prime }\left(
x_{n-1}\right) + 0.5h^{2}V^{\prime \prime }\left( x_{n-2}\right) \left(
x_{n-1}-x_{n-2}\right) \notag \\
&&+ 0.5\gamma h^{3}V^{\prime }\left( x_{n-2}\right) + 0.5h^{2}V^{\prime \prime
}\left( x_{n-2}\right) Z_{n-1}- 0.5h^{4}V^{\prime \prime }\left( x_{n-2}\right)
V^{\prime }\left( x_{n-2}\right) . \notag 
\end{eqnarray}%
This equation suggests that $p=2$ and $q=0$ or $1$. The noise term
$Z_{n}-Z_{n-1}+h\left( 1- 0.5\gamma h\right) W_{n-1}$ is of order $h^{1.5}$,
and involves two independent noise sequences $\left\{ \zeta _{n}\right\}
$ and $\left\{ \eta _{n}\right\} $, hence the above equation for $x_{n}$
is not a NARMA model. However, it suggests possible structures for NARMA
models. In comparison to model (M1), the above equation has (i)
different nonlinear terms of order $h^{2}$: $h^{2}V^{\prime }\left(
x_{n-1}\right) $\ and $h^{2}V^{\prime \prime }\left( x_{n-2}\right)
\left( x_{n-1}-x_{n-2}\right) $; (ii) additional nonlinear terms of
orders three and larger: $h^{3}V^{\prime }\left( x_{n-2}\right) $,
$h^{2}Z_{n-1}V^{\prime \prime }\left( x_{n-2}\right) $, and
$h^{4}V^{\prime \prime }\left( x_{n-2}\right) V^{\prime }\left(
x_{n-2}\right) $.  It is not clear which terms should be used, and one
may be tempted to include as many terms as possible.
However, this can lead to overfitting. Hence, we consider
different structures by successively adding more and more
terms, and select the one that fits data the best. Using the fact
that $V^{\prime }(x)=\beta x^{3}-\alpha x$, these terms lead to the
following possible structures for NARMA: \\ \textbf{Model (M2):}%
\begin{equation*}
X_{n}=a_{1}X_{n-1}+a_{2}X_{n-2}+b_{1}X_{n-1}^{3}+\underbrace{b_{2}X_{n-2}^{2}\left(
X_{n-1}-X_{n-2}\right)} +\xi _{n}+\sum_{j=1}^{q}c_{j}\xi _{n-j}+\mu ,
\end{equation*}%
where $b_{1}$ and $b_{2}$ are of order $h^{2}$, and $q\geq 0$; \newline
\textbf{Model (M3):}%
\begin{eqnarray*}
X_{n}&=&a_{1}X_{n-1}+a_{2}X_{n-2}+b_{1}X_{n-1}^{3}+\underbrace{b_{2}X_{n-2}^{2}\left(
X_{n-1}-X_{n-2}\right)} \\
&& +\underbrace{b_{3}X_{n-2}^{3}}+\xi _{n}+\sum_{j=1}^{q}c_{j}\xi
_{n-j}+\mu ,
\end{eqnarray*}%
where $b_{3}$ is of order $h^{3}$, and $q\geq 0$; \newline
\textbf{Model (M4):}%
\begin{eqnarray*}
X_{n}
&=&a_{1}X_{n-1}+a_{2}X_{n-2}+b_{1}X_{n-1}^{3}+\underbrace{b_{2}X_{n-2}^{2}X_{n-1}}+\underbrace{b_{3}X_{n-2}^{3}}+\underbrace{b_{4}X_{n-2}^{5}}
\\
&&+\underbrace{b_{5}X_{n-2}^{2}\xi _{n-1}}+\xi _{n}+\sum_{j=1}^{q}c_{j}\xi _{n-j}+\mu ,
\end{eqnarray*}%
where $b_{4}$ is of order $h^{4}$, and $b_{5}$ is of order $h^{3.5}$,
and $q\geq 1$. (For the reader's convenience, we have highlighted
  all higher-order terms derived from $V'(x)$.)

From the model (M2)--(M4), the number of nonlinear terms increases as their
order increases in the numerical scheme. Following \cite{CL15,LLC15}, we use only the form of the terms derived from numerical analysis, and not their coefficients; we estimate new coefficients from data.

\section{Numerical study} \label{sec4}
 
We test the continuous and discrete-time
approaches for data sets with different observation intervals
$h$. The data are generated by solving the general Langevin
Eq.~(\ref{Keq}) using a second-order \Ito-Taylor scheme, with a small step size $dt=1/1024$, and making
observations with time intervals $h=1/32,1/16$, and $1/8$; the value of
time step $dt$ in the integration has been chosen to be sufficiently small to guarantee reasonable
accuracy.  For each one of the data sets, we estimate the parameters in
the SDE and in the NARMA models. We then compare the estimated SDE and
the NARMA model by their ability to reproduce long-term statistics and
to perform short-term prediction.

\subsection{The linear Langevin equation}

We first discuss numerical results in the linear case. 
Both approaches start by computing the estimators. The estimator $\hat{\theta}=\left( \hat{\gamma},\hat{\alpha},\hat{\sigma}\right) $ of the
parameters $\left( \gamma ,\alpha ,\sigma \right) $ of the linear Langevin
Eq.~(\ref{OU}) is given by

\begin{equation}
\hat{\theta}=\arg \min_{\theta =\left( \gamma ,\alpha ,\sigma \right) }\left[
\sum_{n=1}^{N-3}\frac{3}{2}\frac{\left[ \hat{y}_{n+2}-\hat{y}_{n+1}+h(\gamma 
\hat{y}_{n}+\alpha x_{n})\right] ^{2}}{h\sigma ^{2}}+\left( N-3\right) \log
\sigma ^{2}\right] ,  \label{MLE_LE}
\end{equation}%
where $\hat{y}_{n}$ is computed from data using (\ref{xBar}).

Following Eq.~(\ref{OU_ARMA21}), we use the ARMA$\left( 2,1\right) $
model in the discrete-time approach: 
\begin{equation*}
X_{n+2}=a_{1}X_{n+1}+a_{2}X_{n}+W_{n}+\theta _{1}W_{n-1},
\end{equation*}%
We estimate the parameters $a_{1},a_{2}, \theta _{1}$, and $\sigma
_{W}^{2}$ from data using the conditional likelihood method of
Section~\ref{sec_Narma}.

First, we investigate the reliability of the estimators. A hundred
simulated data sets are generated from Eq.~(\ref{OU}) with true
parameters $\gamma =0.5$, $\alpha =4$, and $\sigma =1$, and with initial
condition $x_{0}=y_{0}=\frac{1}{2}$ and time interval $[0,10^{4}]$. The
estimators, of $\left( \gamma ,\alpha ,\sigma \right) $ in the linear
Langevin equation and of $\left( a_{1},a_{2},\theta _{1},\sigma
_{W}\right) $ in the ARMA$\left( 2,1\right) $ model, are computed for
each data set. Empirical mean and standard deviation of the estimators
are reported in Table \ref{tab:ST_OU} for the continuous-time approach,
and Table \ref{tab:narmaOU} for the discrete-time approach. In the
continuous-time approach, the biases of the estimators grow as $h$
increases. In particular, large biases occur for the estimators of
$\gamma$: the bias of $\hat{\gamma}$ increases from $0.2313$ when
$h=1/32$ to $0.4879$ when $h=1/8$, while the true value is $\gamma
=0.5$; similarly large biases were also noticed in
\cite{ST12}. In contrast, the biases are much smaller for the
discrete-time approach. The \textquotedblleft theoretical
value\textquotedblright\ (denoted by ``T-value'') of
$a_{1},a_{2}\,,\theta _{1}$ and $\sigma _{W}^{2}$ are computed
analytically as in Example \ref{th-value}. Table \ref{tab:narmaOU} shows
that the estimators in the discrete-time approach have negligible
differences from the theoretical values.

In practice, the above test of the reliability of estimators cannot be
performed, because one has only a single data set and the true system
that generated the data is unknown.

\begin{table}[tbp]
\caption{Mean and standard deviation of the estimators of the parameters $(%
\protect\gamma ,\protect\alpha ,\protect\sigma )$ of the linear Langevin
equation in the continuous-time approach, computed on 100 simulations. }
\label{tab:ST_OU}\centering
\begin{tabular}{clccc}
\hline
Estimator & True value & $h=1/32$ & $h=1/16$ & $h=1/8$ \\ \hline
$\hat{\gamma}$ & $0.5$ & 0.7313 (0.0106) & 0.9538 (0.0104) & 1.3493 (0.0098)
\\ 
$\hat{\alpha}$ & $4$ & 3.8917 (0.0193) & 3.7540 (0.0187) & 3.3984 (0.0172)
\\ 
$\hat{\sigma}$ & $1$ & 0.9879 (0.0014) & 0.9729 (0.0019) & 0.9411 (0.0023)
\\ \hline
\end{tabular}%
\end{table}

\begin{table}[tbp]
\caption{Mean and standard deviation of the estimators of the parameters $(a_{1},a_{2},\protect\theta _{1},\protect\sigma _{W})$ of the ARMA$(2,1)$
model in the discrete-time approach, computed on 100 simulations. The
theoretical value (denoted by T-value) of the parameters are computed from
proposition \protect\ref{prop}. }
\label{tab:narmaOU}\centering
\begin{tabular}{ccccccc}
\hline
Estimator & \multicolumn{2}{c}{$h=1/32$} & \multicolumn{2}{c}{$h=1/16$}
& \multicolumn{2}{c}{$h=1/8$} \\ 
& T-value & Est. value & T-value & Est. value & T-value & Est. value \\ 
\hline
$\hat{a}_{1}$         & 1.9806 & 1.9807 (0.0003) & 1.9539  & 1.9541 (0.0007) & 1.8791 & 1.8796 (0.0014)
\\ 
$-\hat{a}_{2}$       & 0.9845 & 0.9846 (0.0003)  &0.9692  & 0.9695 (0.0007) &  0.9394  & 0.9399 (0.0014)
\\ 
$\hat{\theta}_{1}$   &0.2681 & 0.2667 (0.0017) & 0.2684  & 0.2680 (0.0025) & 0.2698   & 0.2700 (0.0037) \\ 
$\hat{\sigma}_{W}$&0.0043 & 0.0043 (0.0000) & 0.0121  & 0.0121 (0.0000) & 0.0336   & 0.0336 (0.0001) \\ \hline
\end{tabular}%
\end{table}
\setlength{\tabcolsep}{6pt}

We now compare the two approaches in a practical setting, by assuming
that we are only given a single data set from discrete observations of a
long trajectory on time interval $[0,T]$ with $T=2^{17}\approx 1.31\times
10^5$. We estimate the parameters in the SDE and the ARMA model, and
again investigate the performance of the estimated SDE and ARMA model
in reproducing long-term statistics and in predicting the short-term
evolution of $x$. The long-term statistics are computed by
time-averaging. The first half of the data set is used to compute the
estimators, and the second half of the data set is used to test the
prediction.

The long-term statistics, i.e., the empirical probability density
function (PDF) and the autocorrelation function (ACF), are shown in
Figure \ref{fig_OU_acf}. For all the three values of $h$, the ARMA models reproduce the empirical PDF and ACF almost perfectly. The estimated SDEs miss the spread of the PDF and the amplitude of oscillation in the ACF, and these error become larger as $h$ increases.

\begin{figure}[tbp]
\begin{center}
\begin{tabular}{c}
\resizebox{0.92\textwidth}{!}{\includegraphics{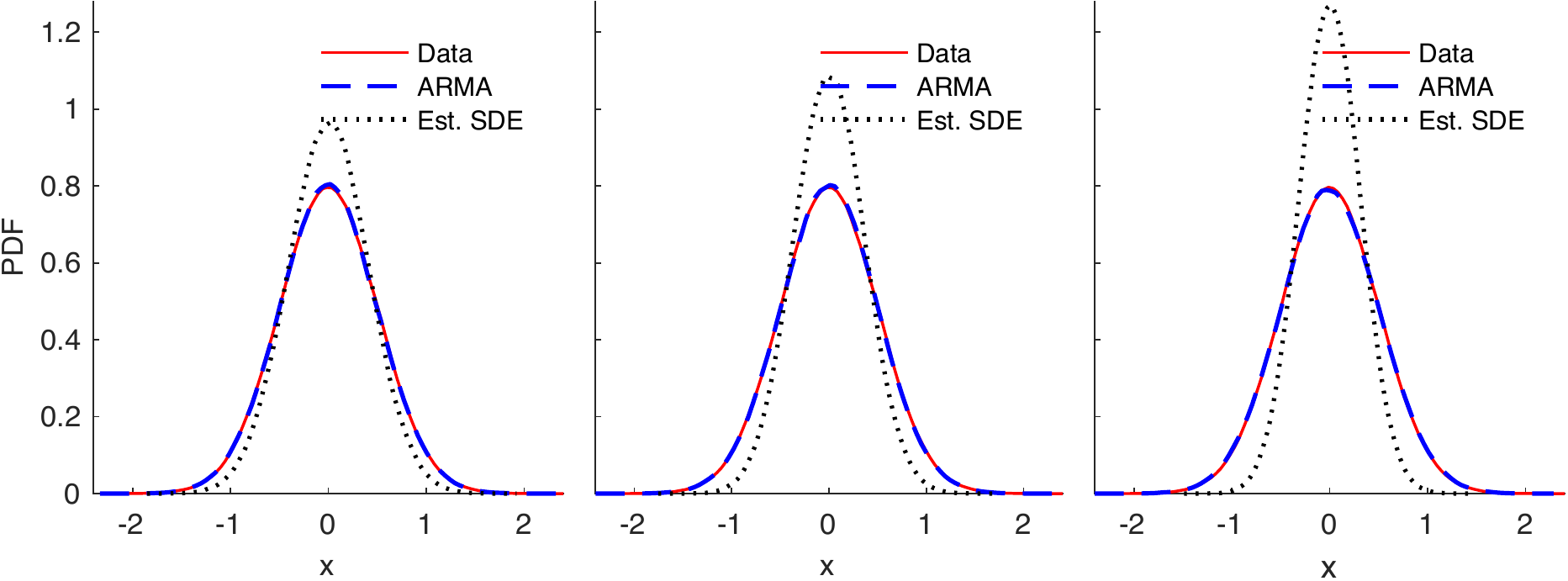}} \\
\resizebox{0.92\textwidth}{!}{\includegraphics{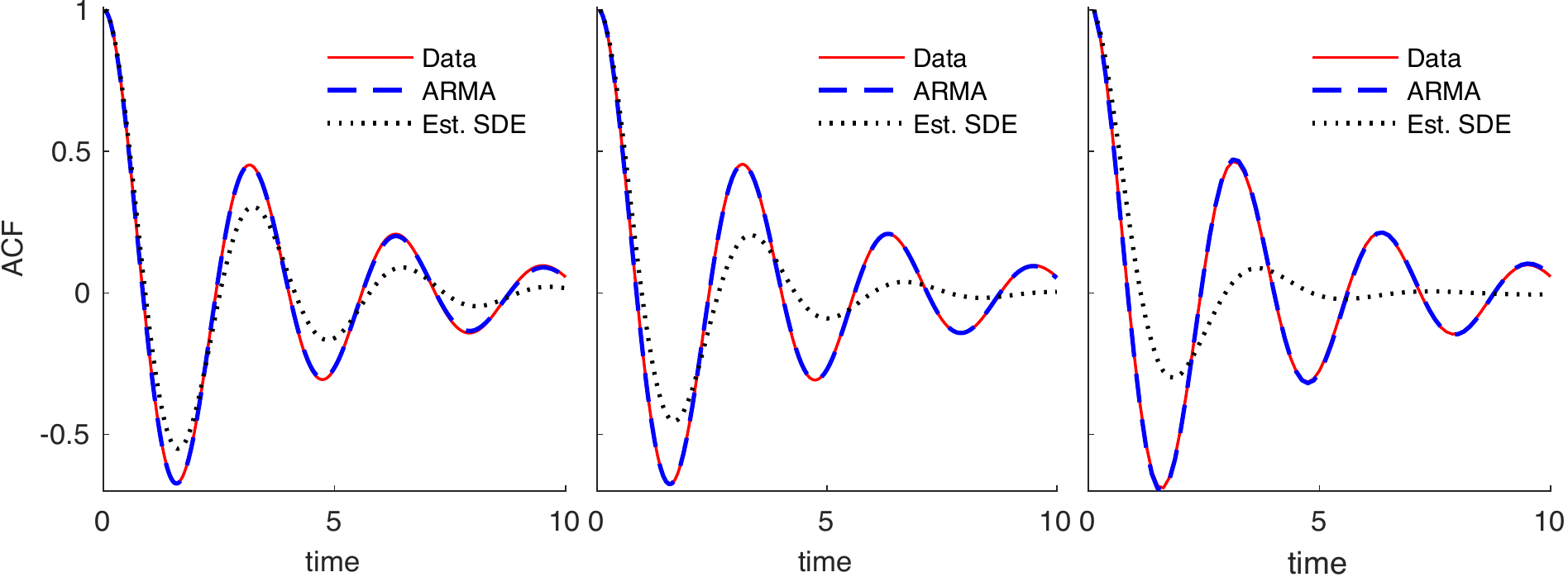}} 
\end{tabular}
\end{center}
\vspace{-6mm}
\caption{Empirical PDF and ACF  of the ARMA$(2,1)$ models and the estimated linear Langevin system (denoted by Est.~SDE), in the cases $h=1/32$ (left column), $h=1/16$  (middle column) and $h=1/8$  (right column). The ARMA models reproduce the PDF and ACF almost perfectly, much better than the estimated SDEs. }
\label{fig_OU_acf}
\end{figure}

Next, we use an ensemble of trajectories to predict the motion of
$x$. For each ensemble, we calculate the mean trajectory and compare it
with the true trajectory from the data. We measure the performance of
the prediction by computing the root-mean-square-error (RMSE) of a large number of ensembles as follows:
take $N_{0}$ short pieces of data from the second half of the long
trajectory, denoted by $\left\{ \left( x_{\left( n_{i}+1\right) h},\dots
,x_{\left( n_{i}+K\right) h}\right) \right\} _{i=1}^{N_{0}}$, where
$n_{i}=Ki$. For each short piece of data $\left( x_{\left(
  n_{i}+1\right) h},\dots ,x_{\left( n_{i}+K\right) h}\right) $, we
generate $N_{ens}$ trajectories $\left\{ \left( X_{1}^{i,j},\dots
,X_{K}^{i,j}\right) \right\} _{j=1}^{N_{ens}}$ using a prediction system
(i.e., the NARMA($ p, q$), the estimated Langevin system, or the true
Langevin system), starting all ensemble members from the same
several-step initial condition $\left( x_{\left( n_{i}+1\right) h},\dots
,x_{\left( n_{i}+m\right) h}\right) $, where $m=2\max \left\{
p,q\right\} +1$. For the NARMA$\left( p,q\right) $ we start with $\xi
_{1}=\cdots =\xi _{q}=0$. For the estimated Langevin system and the true
Langevin system, we start with initial condition $\left(
x_{\left( n_{i}+m\right) h}, \hat{y}_{n_i}\right)$ with $\hat{y}_{n_i}=\frac{x_{\left( n_{i}+m\right) h}-x_{\left(
    n_{i}+m-1\right) h}}{h}$ and solve the
  equations using the \Ito-Taylor scheme of order 2.0 with a time step $dt=1/64$ and record the trajectories every $h/dt$ steps to
get the prediction trajectories $\left( X_{1}^{i,j},\dots
,X_{K}^{i,j}\right) $.

We then calculate the mean trajectory for each ensemble, $\overline{X}%
_{k}^{i}=\frac{1}{N_{ens}}\sum_{j=1}^{N_{ens}}X_{k}^{i,j}$, $k=1,\dots ,K$.
The RMSE measures, in an average sense, the difference between the mean
ensemble trajectory and the true data trajectory:
\begin{equation*}
\mathrm{RMSE}(kh):=\left( \frac{1}{N_{0}}\sum_{i=1}^{N_{0}}\left\vert 
\overline{X}_{k}^{i}-x_{\left( n_{i}+k\right) h}\right\vert ^{2}\right)
^{1/2}.
\end{equation*}
The RMSE measures the accuracy of the mean ensemble prediction; $\mathrm{{RMSE}=0}$ corresponds to a perfect prediction, and small RMSEs are desired.

\begin{figure}[tbp]
\begin{center}
\begin{tabular}{c}
\resizebox{0.95\textwidth}{!}{\includegraphics{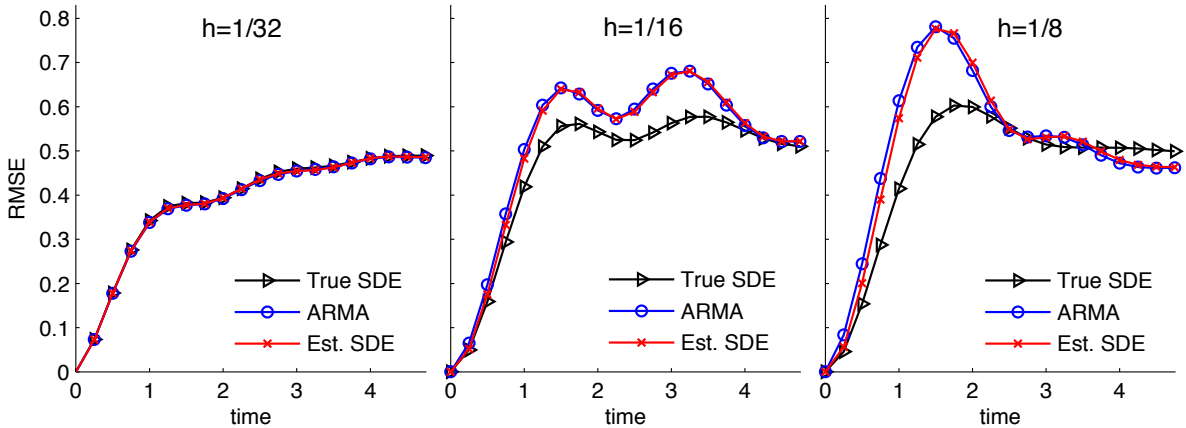}}
\end{tabular}
\end{center}
\vspace{-4mm}
\caption{The linear Langevin system: RMSEs of $10^4$ forecasting ensembles with size $N_{ens}=20$, produced by the true system (denoted by True SDE), the system with estimated parameters (denoted by Est.~SDE), and
the ARMA model. }
\label{fig:RMSE_OU}
\end{figure}

The computed RMSEs for $N_{0}=10^{4}$ ensembles with $N_{ens}=20$ are shown in
Figure~\ref{fig:RMSE_OU}. The ARMA$\left( 2,1\right) $ model reproduces almost exactly
the RMSEs of the true system for all three observation
step-sizes, while the estimated system has RMSEs deviating from that of the true system as $h$ increases. The estimated system has smaller RMSEs than the true system, because it underestimates the variance of the true process $x_t$ (that is, $\frac{\hat{\sigma}^2}{2\hat{\alpha}\hat{\gamma}} < \frac{\sigma^2}{2\alpha\gamma}$) and because the means of $x_t$ decay exponential to zero. The steady increase
in RMSE, even for the true system, is entirely expected because the
forecasting ensemble is driven by independent realizations of the
forcing, as one cannot infer the white noise driving the system that
originally generated the data.

\subsection{The Kramers oscillator}

We consider the Kramers equation in the following form%
\begin{eqnarray}
dx_{t} &=&y_{t}dt, \notag \\
dy_{t} &=&(-\gamma y_{t}-\beta ^{-2}x_{t}^{3}+x_{t})dt+\sigma dB_{t}, \label{Kramer}
\end{eqnarray}%
for which there are two potential wells located at $x=\pm\beta$.

In the continuous-time approach, the estimator $\hat{\theta}=\left( \hat{\gamma},\hat{\beta},\hat{\sigma}\right) $ is given by 
\begin{equation}
\hat{\theta}=\arg \min_{\theta =\left( \gamma ,\beta ,\sigma \right) }\left[
\sum_{n=1}^{N-3}\frac{3}{2}\frac{\left[ \hat{y}_{n+2}-\hat{y}_{n+1}+h(\gamma 
\hat{y}_{n}+\beta ^{-2}x_{n}^{3}-x_{n})\right] ^{2}}{h\sigma ^{2}}+\left(
N-3\right) \log \sigma ^{2}\right] .  \label{MLE_Kramers}
\end{equation}%

As for the linear Langevin system case, we begin by investigating the
reliability of the estimators. A hundred simulated data sets are
generated from the above Kramers oscillator with true parameters
$\gamma=0.5, \beta = 1/\sqrt{10}, \sigma =1$, and with initial condition $x_0=y_0=1/2$ and integration time interval
$[0,10^4]$. The estimators of $(\gamma, \beta,\sigma)$ are computed for
each data set.  Empirical mean and standard deviation of the estimators
are shown in Table \ref{tab:ST}. We observe that the biases in the
estimators increase as $h$ increases, in particular, the estimator of
$\hat{\gamma}$ has a very large bias.

\begin{table}[tbp]
\caption{Mean and standard deviation of the estimators of the parameters $(%
\protect\gamma ,\protect\beta ,\protect\sigma )$ of the Kramers equation in
the continuous-time approach, computed on 100 simulations. }
\label{tab:ST}\centering%
\begin{tabular}{clccc}
\hline
Estimator & True value & $h=1/32$ & $h=1/16$ & $h=1/8$ \\ \hline
$\hat{\gamma}$ & $0.5$ & $0.8726~(0.0063)$ & $1.2049~\left( 0.0057\right) $
& $1.7003~\left( 0.0088\right) $ \\ 
$\hat{\beta}$ & $0.3162$ & $0.3501~\left( 0.0007\right) $ & $0.3662~\left(
0.0007\right) $ & $0.4225~\left( 0.0009\right) $ \\ 
$\hat{\sigma}$ & $1$ & $0.9964~\left( 0.0014\right) $ & $1.0132~\left(
0.0027\right) $ & $1.1150~\left( 0.0065\right) $ \\ \hline
\end{tabular}%
\end{table}

\begin{figure}[h]
\begin{center}
\begin{tabular}{c}
\resizebox{0.60\textwidth}{!}{\includegraphics{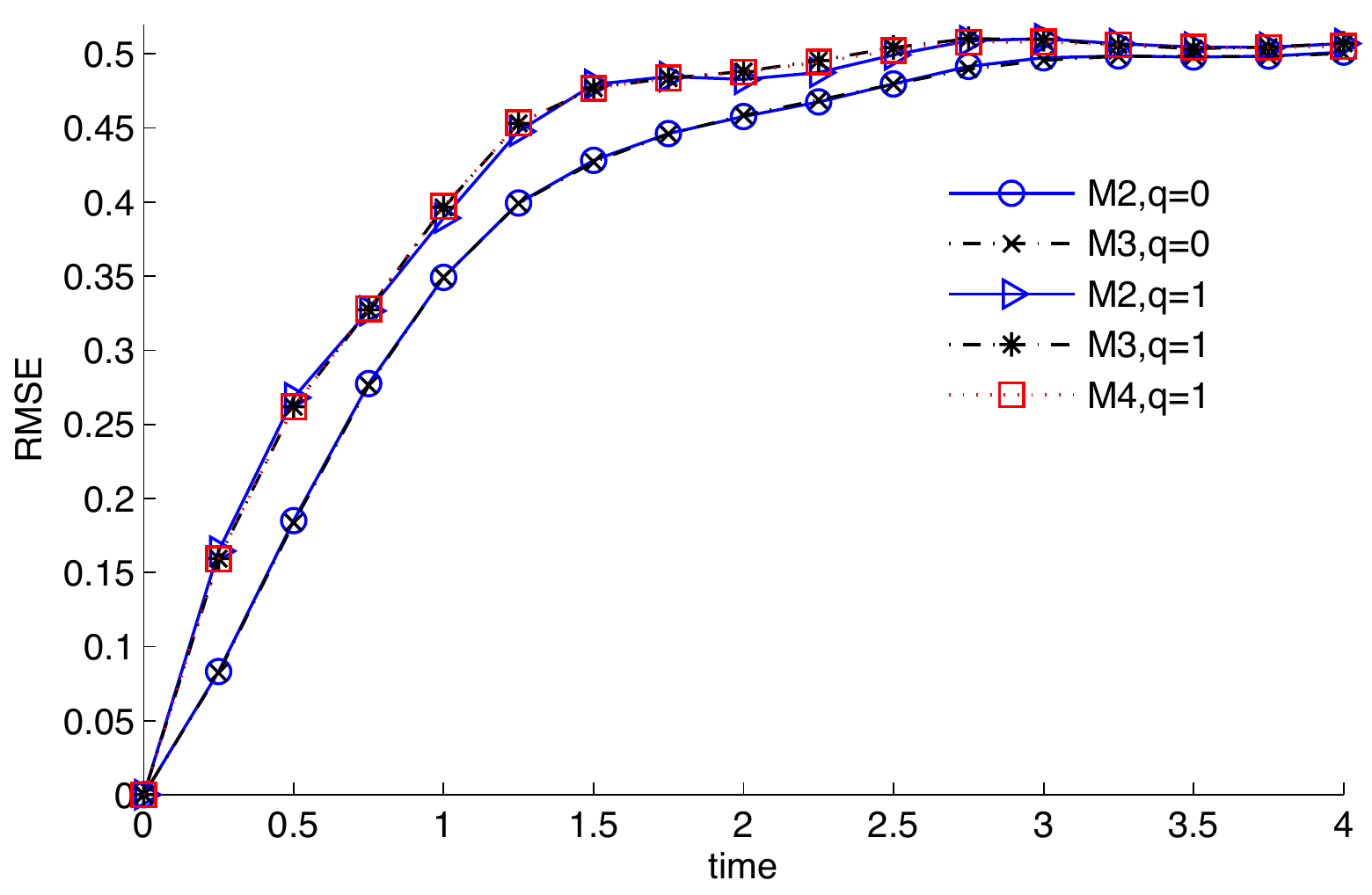}}  
\end{tabular}%
\end{center}
\vspace{-6mm}
\caption{RMSEs of model (M2), (M3), (M4) with ensemble size $N_{ens}=20$ in the case $h=1/8$. Models with $q=1$ have larger RMSEs than the models with $q=0$. In the case $q=0$, models (M2) and (M3) have almost the same RMSEs. }
\label{fig:M234}
\end{figure}

For the discrete-time approach, we have to select one of the four
NARMA$\left( 2,q\right) $ models, Model (M1)--(M4). We make the
selection using data only from a single long trajectory
(e.g. from the time interval $[0,T]$ with $T=2^{18}\approx 2\times
10^5$), and we use the first half of the data to estimate the parameters.
We first estimate the parameters for each NARMA model with $q=0$ and
$q=1$, using the conditional likelihood method described in
Section~\ref{sec_Narma}. Then we make a selection by the criteria
proposed in Section~\ref{sec_Narma}. First, we test numerical stability
by running the model for a large time for different realizations of the
noise sequence.  We find that for our model, using the values of $h$
tested here, Model (M1) is often numerically unstable,
so we do not compare it to the other schemes here. (In situations where
the Euler scheme is more stable, e.g., for smaller values of $h$ or for
other models, we would expect it to be useful as the basis of a NARMA
approximation.) Next, we test the performance of each of the models (M2)--(M4). The
RMSEs of models (M2), (M3) with $q=0$ and $q=1$ and Model (M4) with
$q=1$ are shown in Figure~\ref{fig:M234}. In the case $q=1$, the RMSEs
for models (M2)--(M4) are very close, but they are larger than the RMSEs
of models (M2) and (M3) with $q=0$. To make further selection between
models (M2) and (M3) with $q=0$, we test their reproduction of the
long-term statistics.  Figure \ref{m2m3} shows that model (M3)
reproduces the ACFs and PDFs better than model (M2), hence model (M3)
with $q=0$ is selected.
\begin{figure}[tbp]
\begin{center}
 \begin{tabular}{cc}
  \resizebox{0.45\textwidth}{!}{\includegraphics{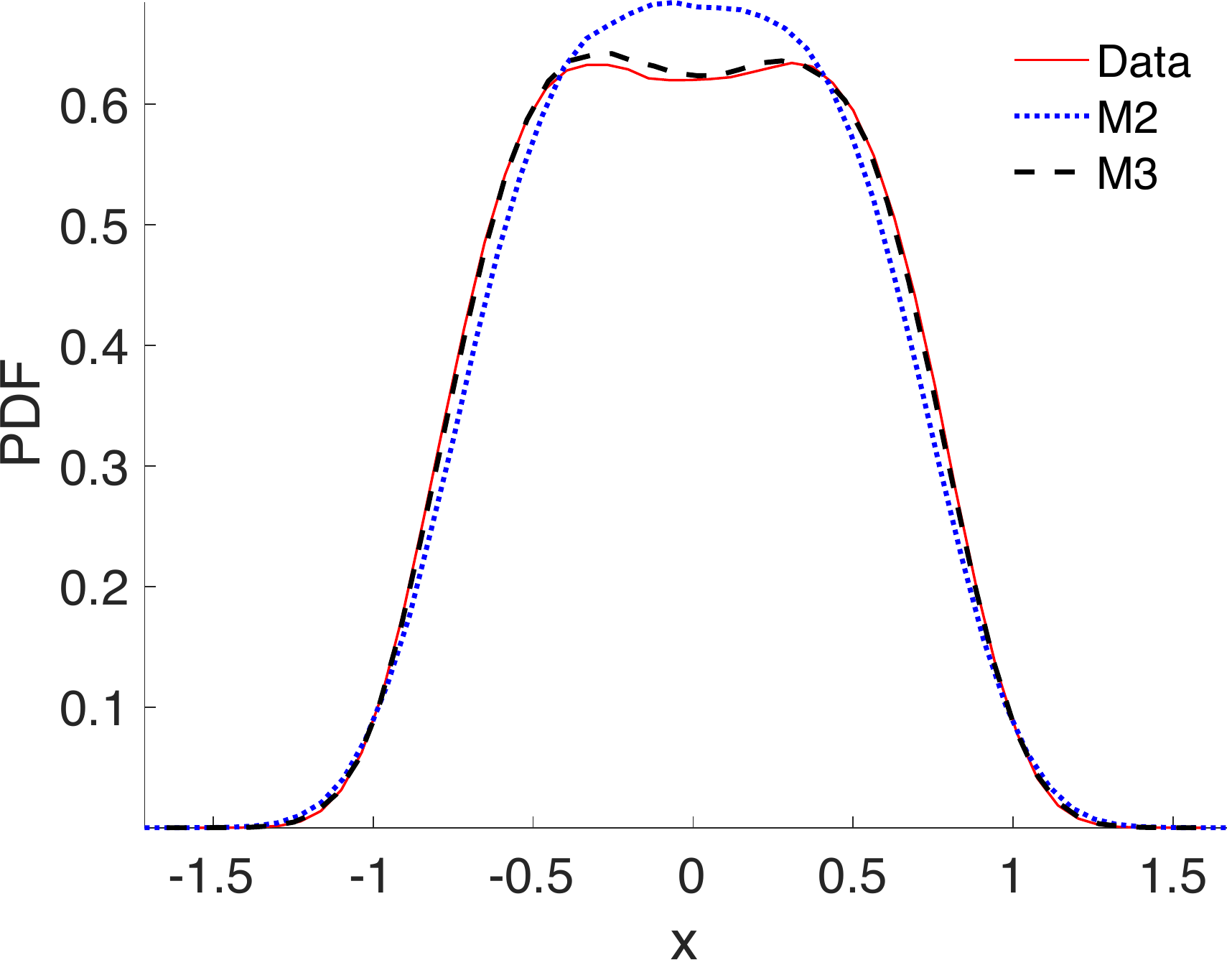}} &
 \resizebox{0.45\textwidth}{!}{\includegraphics{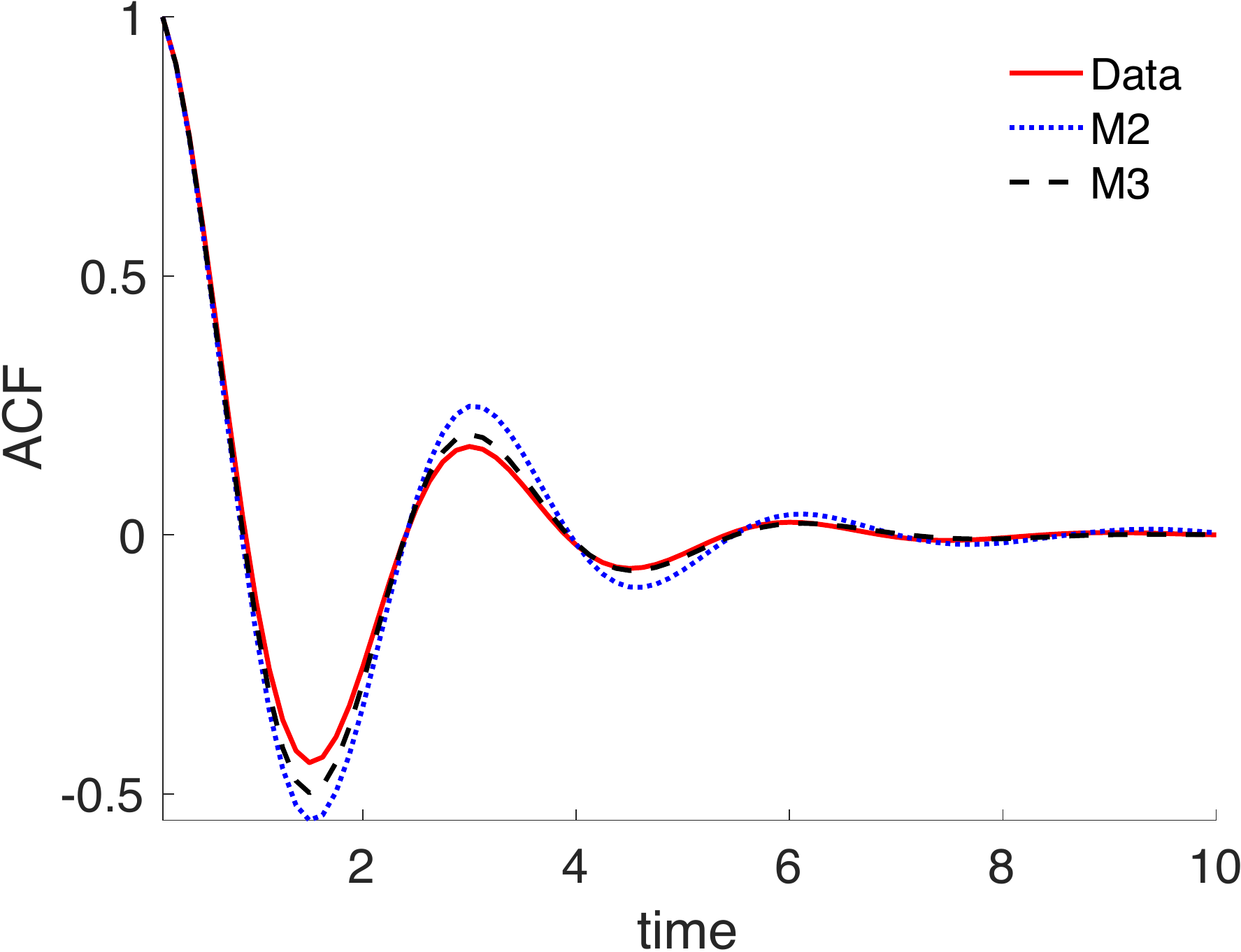}}
  \end{tabular}
\end{center}
\vspace{-6mm}
\caption{Empirical PDFs and ACFs of the NARMA model (M2), (M3) and data in the case $h=1/8$. Model (M3) reproduces the ACF and PDF better than model (M2). }
\label{m2m3}
\end{figure}

The mean and standard deviation of the estimated parameters of model
(M3) with $q=0$ and 100 simulations are shown in Table
\ref{tab:M3}. Unlike in the linear Langevin system case, we do not have
a theoretical value for these parameters. However, note that when
$h=1/32$, $\hat{a}_1$ and $\hat{a}_2$ are close to $2-\gamma h +0.5\gamma^2
h^2 =1.9845$ and $-(1-\gamma h +0.5\gamma^2 h^2)=-0.9845$ respectively,
which are the coefficients in Eq.~(\ref{IT2_narma}) from the
\Ito-Taylor scheme. This indicates that when $h$ is small, the NARMA
model is close to the numerical scheme, because both the NARMA and the
numerical scheme approximate the true system well. On the other hand,
note that $\hat{\sigma}_W$ does not increase monotonically as $h$
increases. This clearly distinguishes the NARMA model from the numerical
schemes.

\begin{table}[tbp]
\caption{Mean and standard deviation of the estimators of the parameters 
of the NARMA model (M3) with $q=0$ in
the discrete-time approach, computed from 100 simulations. }
\label{tab:M3}\centering%
\begin{tabular}{cccc} 
\hline
Estimator & $h=1/32$ & $h=1/16$ & $h=1/8$ \\ \hline
$\hat{a}_{1}$ & 1.9906 (0.0004) & 1.9829 (0.0007)  & 
1.9696 (0.0014) \\ 
$-\hat{a}_{2}$ & 0.9896(0.0004) & 0.9792 (0.0007) & 0.9562 (0.0014) \\ 
$-\hat{b}_{1}$ & 0.3388 (0.1572) & 0.6927 (0.0785) & 1.2988 (0.0389) \\ 
$\hat{b}_{2}$  & 0.0300 (0.1572) & 0.0864 (0.0785) & 0.1462 (0.0386) \\ 
$\hat{b}_{3}$  & 0.0307 (0.1569) & 0.0887 (0.0777) & 0.1655 (0.0372) \\ 
$-\hat{\mu} ~\left( \times 10^{-5}\right) $ & 0.0377 (0.0000) & 0.1478 (0.0000) & 
0.5469 (0.0001) \\ 
$\hat{\sigma}_{W}$ & 0.0045 (0.0000) & 0.1119 (0.0001) & 0.0012 (0.0000)\\ \hline
\end{tabular}\\
\end{table}

Next, we compare the performance of the NARMA model and the estimated
Kramers system in reproducing long-term statistics and predicting
short-term dynamics.
The empirical PDFs and ACFs are shown in Figure
\ref{fig:acf_Kramers}. The NARMA models can reproduce the PDFs and ACFs
equally well for three cases. The estimated Kramers system amplifies the
depth of double wells in the PDFs, and it misses the oscillation of the
ACFs.
\begin{figure}[tbp]
\begin{center}
\begin{tabular}{c}
\resizebox{0.90\textwidth}{!}{\includegraphics{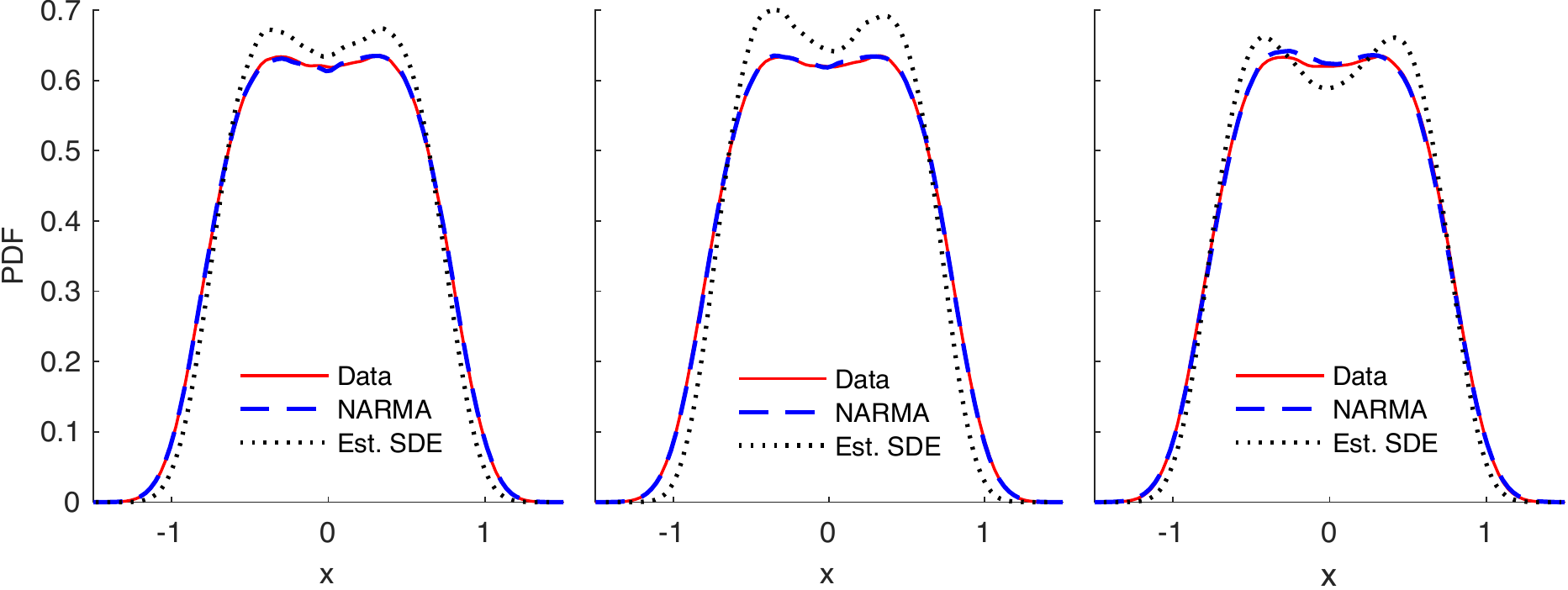}}  \\
\resizebox{0.90\textwidth}{!}{\includegraphics{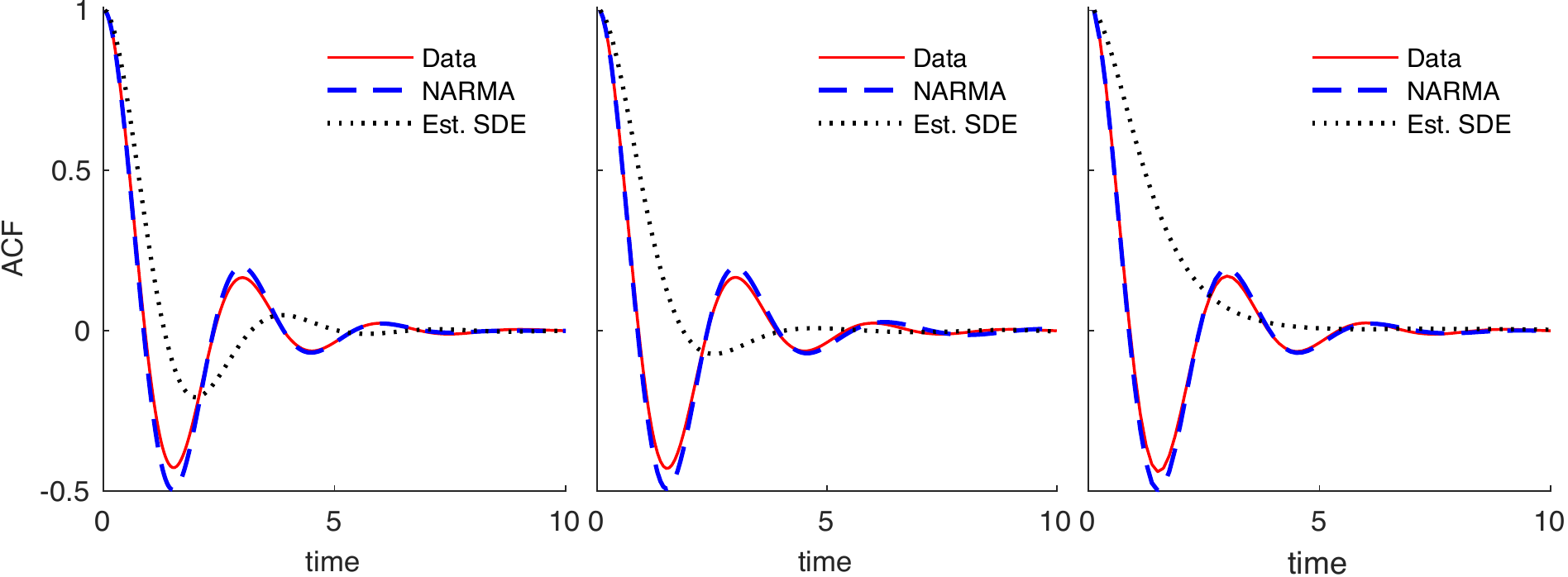}}  
\end{tabular}
\end{center}
\vspace{-6mm}
\caption{Empirical PDFs and ACFs of the NARMA model (M3) with $q=0$ and the estimated Kramers system, in the cases $h=1/32$ (left column), $h=1/16$  (middle column) and $h=1/8$  (right column). These statistics are better reproduced by the NARMA models than by the estimated Kramers systems.}
\label{fig:acf_Kramers}
\end{figure}

Results for RMSEs for $N_{0}=10^{4}$ ensembles with size $N_{ens}=20$ are shown in
Figure~\ref{fig:M2}. The
NARMA model reproduces almost exactly the RMSEs of the true Kramers
system for all three step-sizes, while the estimated Kramers system has
increasing error as $h$ increases, due to the increasing biases in the
estimators.

\begin{figure}[tbp]
\begin{center}
\begin{tabular}{c}
\resizebox{0.90\textwidth}{!}{\includegraphics{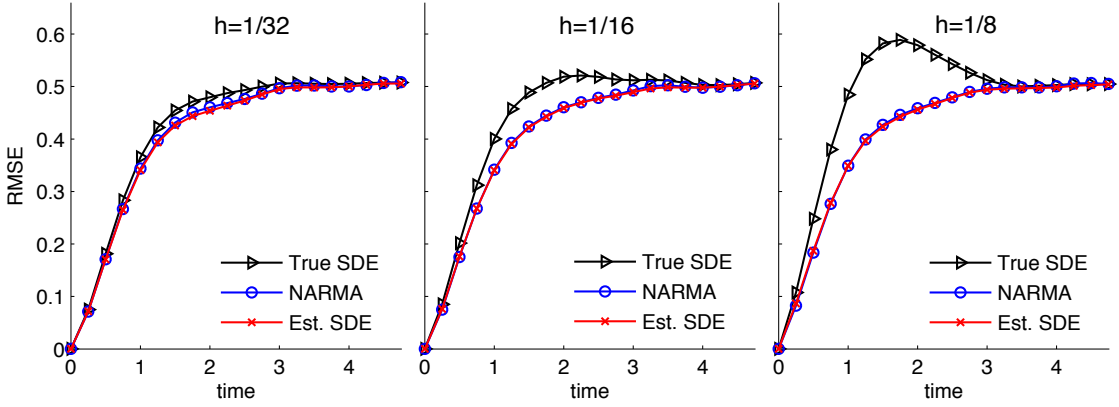}}
\end{tabular}
\end{center}
\vspace{-6mm}
\caption{The Kramers system: RMSEs of $10^4$ forecasting ensembles
 with size $N_{ens}=20$, produced by the true Kramers system, the Kramers system with estimated parameters, and the NARMA model (M3) with $q=0$. The NARMA model has almost the same RMSEs as the true system
  for all the observation spacings, while the estimated system has larger RMSEs.
  }
\label{fig:M2}
\end{figure}

Finally, in Figure~\ref{fig:M2hsmall}, we show some results using
  a much smaller observation spacing, $h=1/1024$.
Figure~\ref{fig:M2hsmall}(a) shows the estimated parameters, for both
the continuous and discrete-time models.  (Here, the discrete-time
model is M2.)  Consistent with the theory in \cite{ST12}, our parameter
estimates for the continuous time model are close to their true
values for this small value of $h$.  Figure~\ref{fig:M2hsmall}(b)
compares the RMSE of the continuous-time and discrete-time models on the
same forecasting task as before.  The continuous-time approach now
performs much better, essentially as well as the true model.  Even in
this regime, however, the discrete-time approach remains competitive.

\begin{figure}[tbp]
  \begin{center}
    \begin{tabular}{cp{0.2in}c}
      \begin{tabular}{lll}
        \hline
        \multicolumn{3}{l}{Continuous-time model parameters}\\
        $\hat{\gamma}$ & $-\hat{\beta}$  & $\hat{\sigma}$ \\
        0.5163                &  0.3435           & 1.0006  \\[2ex]
        \hline
        \multicolumn{3}{l}{Discrete-time model parameters}\\
        $\hat{a}_1$ & $-\hat{a}_2$  & $-\hat{b}_1$ \\
        1.9997         &  0.9997         &   0.0097 \\[2ex]
        $-\hat{b}_2$ & $- \hat{\mu} (\times 10^{-8}) $ & $\hat{\sigma_W} (\times 10^{-10})$ \\
        0.0169      & 2.0388  & 6.2165 \\
        \hline
      \end{tabular}
      && \hspace{-9mm}
      \resizebox{3in}{!}{\includegraphics[viewport=0in 2in 5.8in 5.7in]{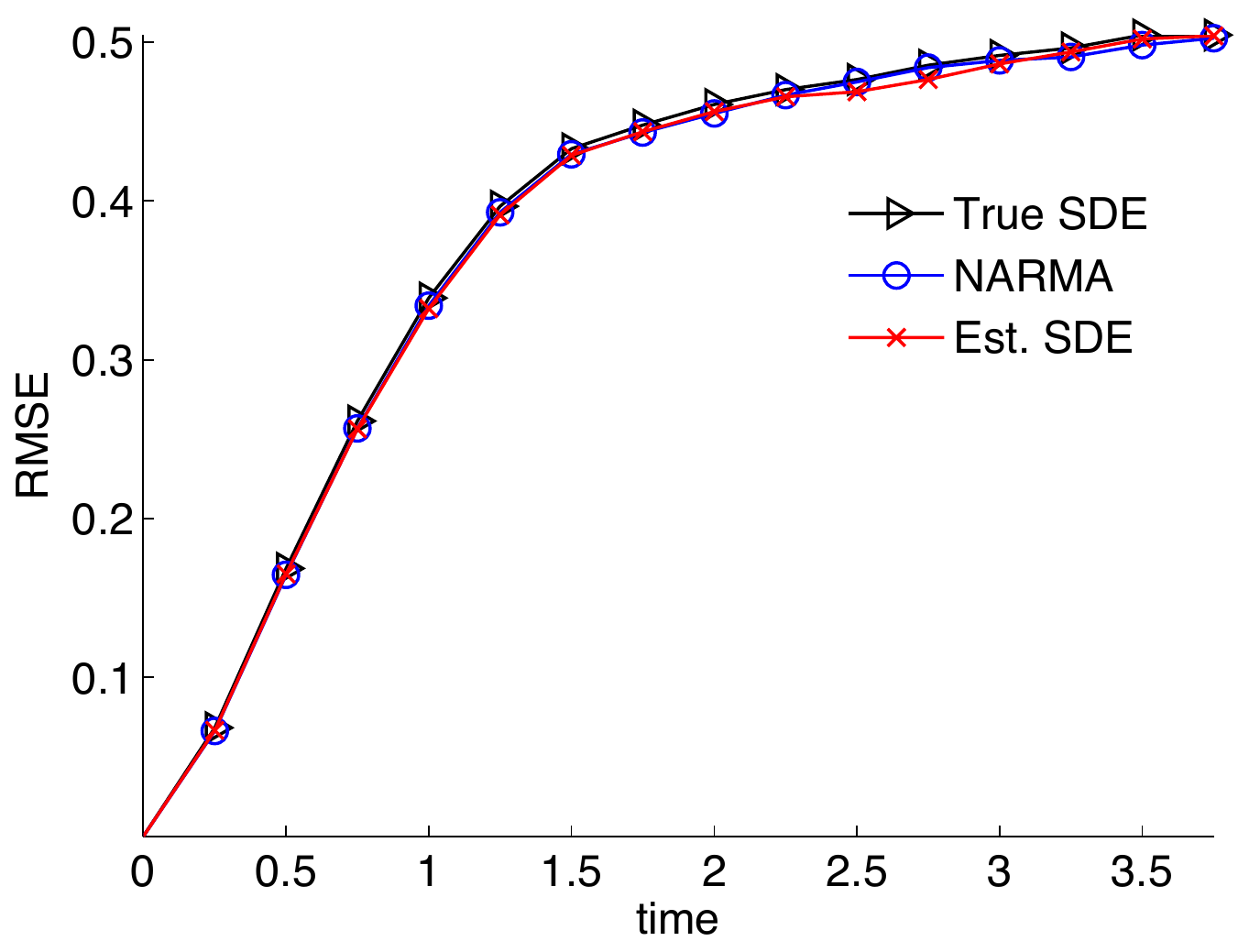}}\\[1.0in]
      (a) Estimated parameter values && (b) $h=1/1024$\\
\end{tabular}
\end{center}
\vspace{-4mm}
\caption{(a) Estimated parameters for the continuous-time and discrete-time models. (b) RMSEs of $10^3$ forecasting ensembles with size $N_{ens}=20$, produced by the true Kramers system (True SDE), the Kramers system with estimated parameters(Est.~SDE), and the NARMA model (M2) with $q=0$. Since $h=1/1024$ is relatively small, the NARMA model and the estimated system have
almost the same RMSEs as the true system. Here the data is generated
by the \Ito-Taylor solver with step size $dt=2^{-15}\approx 3\times
10^{-5}$, and data length is $N=2^{22}\approx 4\times 10^6$. 
}
\label{fig:M2hsmall}
\end{figure}%

\subsection{Criteria for structure design} \label{secDiscussion}

\begin{table}[tbp]
\caption{Consistency test. Values of the estimators in the
  NARMA models (M2) and (M3) with $q=0$. The data come from a long
  trajectory with observation spacing $h=1/32$. Here $N=2^{22}\approx 4\times10^6$. As
  the length of data increases, the estimators of model (M2) have much
  smaller oscillation than the estimators of model (M3).}
\label{tab:ConsTest}\centering%
\begin{tabular}{ccc|cccc}
\hline
Data length& \multicolumn{2}{c|}{Model (M2)} & \multicolumn{3}{c}{Model (M3)} \\ 
$(\times N)$ & $-\hat{b}_1$ & $-\hat{b}_2$  & $-\hat{b}_1$  & $\hat{b}_2$  & $\hat{b}_3$  \\ 
\hline
$1/8$   & 0.3090 & 0.3032   & 0.3622  & 0.0532  & 0.0563   \\ 
$1/4$   & 0.3082 & 0.3049   & 0.3290  & 0.0208  & 0.0217  \\ 
$1/2$   & 0.3088 & 0.3083   & 0.3956  & 0.0868  & 0.0845   \\ 
$1$      & 0.3087 & 0.3054   & 0.3778  & 0.0691  & 0.0697   \\ \hline
\end{tabular}\\
\end{table}

In the above structure selection between model (M2) and (M3), we
followed the criterion of selecting the one that fits the long-term
statistics best. However, there is another practical criterion, namely whether the estimators converge as the
number of samples increases. This is important because the
estimators should converge to the true values of the parameters if the
model is correct, due to the consistency discussed in
Section~\ref{sec_Narma}.  Convergence can be tested by checking the
oscillations of estimators as data length increases: if the oscillations
are large, the estimators are likely not to converge, at least not
quickly. Table \ref{tab:ConsTest} shows the estimators of the
coefficients of the nonlinear terms in model (M2) and (M3), for
different lengths of data. The estimators $\hat{b}_1,\hat{b}_2$ and
$\hat{b}_3$ of model (M3) are unlikely to be convergent, since they vary
a lot for long data sets. On the contrary, the estimators $\hat{b}_1$
and $\hat{b}_2$ of model (M2) have much smaller oscillations, and hence
they are likely to be convergent.

These convergence tests agree with the statistics of the estimators on
100 simulations in Tables~\ref{tab:M3} and~\ref{tab:M2}. Table
\ref{tab:M3} shows that the standard deviations of the estimators
$\hat{b}_1,\hat{b}_2$ and $\hat{b}_3$ of model (M3) are reduced by half
as $h$ doubles, which is the opposite of what is supposed to happen for
an accurate model. On the contrary, Table \ref{tab:M2} shows that the
standard deviations of the parameters of model (M2) increase as $h$
doubles, as is supposed to happen for an accurate model.

In short, model (M3) reproduces better long-term statistics than model
(M2), but the estimators of model (M2) are statistically better (e.g. in
 rate of convergence) than the estimators of
model (M3).  However, the two have
almost the same prediction skill as shown in Figure \ref{fig:M234}, and
both are much better than the continuous-time approach. It is unclear
which model approximates the true process better, and it is likely that
neither of them is optimal. Also, it is unclear which criterion is
better for structure selection: fitting the long-term statistics or
consistency of estimators.  We leave these issues to be addressed in
future work.

\begin{table}[tbp]
\caption{Mean and standard deviation of the estimators of the parameters $(
a_1, a_2, b_1, b_2, \mu, \sigma_W)$ of the NARMA model (M2) with $q=0$ in
the discrete-time approach, computed on 100 simulations. }
\label{tab:M2}\centering%
\begin{tabular}{cccc}
\hline
Estimator & $h=1/32$ & $h=1/16$ & $h=1/8$ \\ \hline
$\hat{a}_{1}$ & 1.9905 (0.0003) & 1.9820 (0.0007)  & 
1.9567 (0.0013) \\ 
$-\hat{a_{2}}$ & 0.9896 (0.0003) & 0.9788 (0.0007) & 0.9508 (0.0014) \\ 
$-\hat{b_{1}}$ & 0.3088 (0.0021) & 0.6058 (0.0040) & 1.1362 (0.0079) \\ 
$-\hat{b_{2}}$ & 0.3067 (0.0134) & 0.5847 (0.0139) & 0.9884 (0.0144) \\ 
$-\hat{\mu} ~\left( \times 10^{-5}\right) $ & 0.0340 (0.0000) & 0.1193 (0.0000) & 
0.2620 (0.0001) \\ 
$\hat{\sigma}_{W}$ & 0.0045 (0.0000) & 0.1119 (0.0001) & 0.0012 (0.0000)\\ 
\hline
\end{tabular}%
\end{table}

\section{Concluding discussion} \label{sec5}

We have compared a discrete-time approach and a continuous-time approach
to the data-based stochastic parametrization of a dynamical system, in a
situation where the data are known to have been generated by
hypoelliptic stochastic system of a given form. In the continuous time
case, we first estimated the coefficients in the given equations using
the data, and then solved the resulting differential equations; in the
discrete-time model, we chose structures with terms suggested by
numerical algorithms for solving the equations of the given form, with
coefficients estimated using the data.

As discussed in our earlier papers \cite{CL15,LLC15}, the discrete-time
approach has several a priori advantages:
\begin{enumerate} 
\item the inverse problem of estimating the parameters in a model from
  discrete data is in general better-posed in a discrete-time than in a
  continuous-time model. In particular, the discrete time representation
  is more tolerant of relatively large observation spacings.
\item once the discrete-time parametrization has been derived, it can be
  used directly in numerical computation, there is no need of further
  approximation. This is not a major issue in the present paper where
  the equations are relatively simple, but we expect it to grow in
  significance as the size of problems increases.
\end{enumerate}
Our example validates the first of these points; the discrete-time
approximations generally have better prediction skills than the
continuous-time parametrization, especially when the observation spacing
is relatively large. This was also the main source of error in the
continuous models discussed in \cite{CL15}; note that the method for
parameter estimation in that earlier paper was completely different.
Our discrete-time models also have better numerical properties, e.g.,
when all else is equal, they are more stable and produce more
  accurate long term statistics than their continuous-time
counterparts.

We expect the advantages of the discrete-time approach to become more
marked as one proceeds to analyze systems of growing complexity,
particularly larger, more chaotic dynamical systems. A number of
  questions remain, first and foremost being the identification of
effective structures; this is of course a special case of the difficulty
in identifying effective bases in the statistical modeling of complex
phenomena.  In the present paper we introduced the idea of using terms
derived from numerical approximations; different ideas were introduced
in our earlier work \cite{LLC15}.  More work is needed to generate
general tools for structure determination.

Another challenge is that, even when one has derived a small number
  of potential structures, we currently do not have a systematic way to
  identify the most effective model.  Thus, the selection of a suitable
  discrete-time model can be labor-intensive, especially compared to the
  continuous-time approach in situations where a parametric family
  containing the true model (or a good approximation thereof) is known.
 On the other hand, continuous-time approaches, in situations where no
  good family of models is known, would face similar difficulties.

Finally, another open question is whether discrete-time approaches
  generally produce more accurate predictions than continuous-time
  approaches for strongly chaotic systems.  Previous work has suggested
  that the answer may be yes.  We plan to address this question more
  systematically in future work.

\paragraph{Acknowledgments.} We would like to thank the anonymous referee and Dr.~Robert Saye for helpful suggestions. 
KKL is supported in part by the National Science Foundation under grant DMS-1418775. AJC and FL are supported in part by the Director, Office of Science, Computational and Technology Research, U.S. Department of Energy, under Contract No. DE-AC02-05CH11231, and by the National Science Foundation under grant DMS-1419044.

\appendix

\section{Solutions to the linear Langevin equation\label{solutionLLE}}

Denoting 
\begin{equation*}
\mathbf{X}_{t}\mathbf{=}\left( 
\begin{array}{c}
x_{t} \\ 
y_{t}%
\end{array}%
\right) ,\mathbf{A}=\left( 
\begin{array}{cc}
0 & 1 \\ 
-\alpha & -\gamma%
\end{array}%
\right) ,\text{ }\mathbf{e}=\left( 
\begin{array}{c}
0 \\ 
\sigma%
\end{array}%
\right) ,
\end{equation*}%
we can write Eq.~(\ref{OU}) as 
\begin{equation*}
d\mathbf{X}_{t}\mathbf{=AX}_{t}dt+\mathbf{e}dB_{t}.
\end{equation*}%
Its solution is 
\begin{equation*}
\mathbf{X}_{t}=e^{\mathbf{A}t}\mathbf{X}_{0}+\int_{0}^{t}e^{\mathbf{A}(t-u)}%
\mathbf{e}dB_{u}.
\end{equation*}

The solution at discrete times can be written as%
\begin{eqnarray*}
x_{\left( n+1\right) h} &=&a_{11}x_{nh}+a_{12}y_{nh}+W_{n+1,1}, \\
y_{\left( n+1\right) h} &=&a_{21}x_{nh}+a_{22}y_{nh}+W_{n+1,2},
\end{eqnarray*}%
where $a_{ij}=\left( e^{\mathbf{A}h}\right) _{ij}$ for $i,j=1,2$, and 
\begin{equation}
W_{n+1,i}=\sigma \int_{0}^{h}a_{i2}(u)dB\left( nh+u\right)  \label{Vi}
\end{equation}%
with $a_{i2}(u)=\left( e^{\mathbf{A}(h-u)}\right) _{i2}$ for $i=1,2$. Note
that if $a_{12}\neq 0$, then from the first equation we get $y_{nh}=\left(
x_{\left( n+1\right) h}-a_{11}x_{nh}-V_{n+1,1}\right) /a_{12}$. Substituting
it into the second equation we obtain
\begin{eqnarray*}
x_{\left( n+2\right) h} &=&\left( a_{11}+a_{22}\right) x_{\left( n+1\right)
h}+\left( a_{12}a_{21}-a_{11}a_{22}\right) x_{nh} \\
&&-a_{22}W_{n+1,1}+a_{12}W_{n+1,2}+W_{n+2,1}.
\end{eqnarray*}%
Combining with the fact that $a_{11}+a_{22}=\mathrm{trace}(e^{\mathbf{A}h})$
and $a_{12}a_{21}-a_{11}a_{22}=-e^{-\gamma h}$, we
have 
\begin{equation}
x_{\left( n+2\right) h}=\mathrm{trace}(e^{\mathbf{A}h})x_{\left( n+1\right)
h}-e^{-\gamma h}x_{nh}-a_{22}W_{n+1,1}+W_{n+2,1}+a_{12}W_{n+1,2}.
\label{OU_ts}
\end{equation}

Clearly, the process $\left\{ x_{nh}\right\} $ is a centered Gaussian
process, and its distribution is determined by its autocovariance function.
Conditionally on $\mathbf{X}_{0}$, the distribution of $\mathbf{X}_{t}$ is $\mathcal{N}(e^{\mathbf{A}t}\mathbf{X}_{0},\mathbf{\Sigma }(t))$, where $\mathbf{\Sigma }(t):=\int_{0}^{t}e^{\mathbf{A}u}\mathbf{ee}^{T}e^{\mathbf{A}%
^{T}u}du$. Since $\alpha ,\gamma >0$, the real parts of the eigenvalues of
the $A$, denoted by $\lambda _{1}$ and $\lambda _{2}$, are negative. The
stationary distribution is $\mathcal{N}(0,\mathbf{\Sigma }(\infty ))$, where 
$\mathbf{\Sigma }(\infty )=\lim_{t\rightarrow \infty }\mathbf{\Sigma }(t)$.
If $\mathbf{X}_{0}$ has distribution $\mathcal{N}(0,\mathbf{\Sigma }(\infty
))$, then the process $\left( \mathbf{X}_{t}\right) $ is stationary, and so
is the observed process $\left\{ x_{nh}\right\} $. The following lemma
computes the autocorrelation function of the stationary process $\left\{
x_{nh}\right\} $.

\begin{lemma}
\label{eigen}Assume that the system $(\ref{OU})$ is stationary. Denote by $\left\{ \gamma _{j}\right\} _{j=1}^{\infty }$ the autocovariance function of
the stationary process $\left\{ x_{nh}\right\} $, i.e. $\gamma _{j}:=\mathbb{E}[x_{kh}x_{(k+j)h}] $
for $j\geq 0$. Then $\gamma _{0}=\frac{\sigma ^{2}}{2\alpha \gamma }$, and $\gamma _{j}$ can be represented as 
\begin{equation*} \label{gamma}
\gamma _{j}=\gamma _{0}\times \left\{ 
\begin{array}{cc}
\frac{1}{\lambda _{1}-\lambda _{2}}(\lambda _{1}e^{\lambda _{2}jh}-\lambda
_{2}e^{\lambda _{1}jh}),~ & \text{if }\gamma ^{2}-4\alpha \neq 0; \\ 
e^{\lambda _{0}jh}(1-\lambda _{0}jh), & \text{if }\gamma ^{2}-4\alpha =0%
\end{array}%
\right.
\end{equation*}%
for all $j\geq 0$, where $\lambda _{1}$ and $\lambda _{2}$ are the different
solutions to $\lambda ^{2}+\gamma \lambda +\alpha =0$ when $\gamma
^{2}-4\alpha \neq 0$, and $\lambda _{0}=-\gamma /2$.
\end{lemma}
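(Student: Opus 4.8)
The plan is to exploit the explicit linear-Gaussian structure already set up for Eq.~(\ref{OU}), reducing everything to one scalar entry of a matrix exponential. First I would establish the standard autocovariance identity for a stationary vector Ornstein--Uhlenbeck process. Using the solution formula $\mathbf{X}_{t}=e^{\mathbf{A}t}\mathbf{X}_{0}+\int_{0}^{t}e^{\mathbf{A}(t-u)}\mathbf{e}\,dB_{u}$ recorded above, I write $\mathbf{X}_{(k+j)h}=e^{\mathbf{A}jh}\mathbf{X}_{kh}+\int_{kh}^{(k+j)h}e^{\mathbf{A}((k+j)h-u)}\mathbf{e}\,dB_{u}$ and observe that the forward stochastic integral is driven only by increments of $B$ after time $kh$, hence is independent of $\mathbf{X}_{kh}$ and has mean zero. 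Forming the outer product $\mathbf{X}_{kh}\mathbf{X}_{(k+j)h}^{T}$ and taking expectations, the cross term vanishes, and since in the stationary regime $\mathbb{E}[\mathbf{X}_{kh}\mathbf{X}_{kh}^{T}]=\mathbf{\Sigma}(\infty)$, I obtain $\mathbb{E}[\mathbf{X}_{kh}\mathbf{X}_{(k+j)h}^{T}]=\mathbf{\Sigma}(\infty)e^{\mathbf{A}^{T}jh}$. The desired $\gamma_{j}$ is the $(1,1)$ entry of this matrix.

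Second, I would compute $\mathbf{\Sigma}(\infty)$ explicitly. This can be read off directly from the stationary density $p(x,y)\propto\exp\!\big(-\tfrac{2\gamma}{\sigma^{2}}(\tfrac{1}{2}y^{2}+\tfrac{\alpha}{2}x^{2})\big)$, which factorizes in $x$ and $y$, so that $x$ and $y$ are independent centered Gaussians with variances $\tfrac{\sigma^{2}}{2\alpha\gamma}$ and $\tfrac{\sigma^{2}}{2\gamma}$; alternatively one solves the Lyapunov equation $\mathbf{A}\mathbf{\Sigma}(\infty)+\mathbf{\Sigma}(\infty)\mathbf{A}^{T}=-\mathbf{e}\mathbf{e}^{T}$ entrywise, which forces the off-diagonal entry to vanish and gives the same answer. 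Either route yields $\mathbf{\Sigma}(\infty)=\mathrm{diag}\!\big(\tfrac{\sigma^{2}}{2\alpha\gamma},\tfrac{\sigma^{2}}{2\gamma}\big)$, and in particular $\gamma_{0}=\tfrac{\sigma^{2}}{2\alpha\gamma}$. The key simplification is that, because $\mathbf{\Sigma}(\infty)$ is diagonal, the $(1,1)$ entry of $\mathbf{\Sigma}(\infty)e^{\mathbf{A}^{T}jh}$ collapses to $\Sigma_{11}(e^{\mathbf{A}^{T}jh})_{11}=\gamma_{0}(e^{\mathbf{A}jh})_{11}$, so only a single scalar entry of the matrix exponential survives.

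Third, I would evaluate $(e^{\mathbf{A}jh})_{11}$ using the eigenvalues $\lambda_{1},\lambda_{2}$ of $\mathbf{A}$, i.e.\ the roots of $\lambda^{2}+\gamma\lambda+\alpha=0$. When $\gamma^{2}-4\alpha\neq0$ the eigenvalues are distinct, and the spectral representation $e^{\mathbf{A}t}=\frac{(\mathbf{A}-\lambda_{2}I)e^{\lambda_{1}t}-(\mathbf{A}-\lambda_{1}I)e^{\lambda_{2}t}}{\lambda_{1}-\lambda_{2}}$, evaluated at the $(1,1)$ entry where $(\mathbf{A})_{11}=0$, gives exactly $\frac{\lambda_{1}e^{\lambda_{2}t}-\lambda_{2}e^{\lambda_{1}t}}{\lambda_{1}-\lambda_{2}}$. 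When $\gamma^{2}-4\alpha=0$ there is a double root $\lambda_{0}=-\gamma/2$; writing $\mathbf{A}=\lambda_{0}I+\mathbf{N}$ with $\mathbf{N}:=\mathbf{A}-\lambda_{0}I$ nilpotent ($\mathbf{N}^{2}=0$), I get $e^{\mathbf{A}t}=e^{\lambda_{0}t}(I+t\mathbf{N})$, whose $(1,1)$ entry is $e^{\lambda_{0}t}(1-\lambda_{0}t)$. Setting $t=jh$ and multiplying by $\gamma_{0}$ reproduces both branches of the claimed formula.

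All of these computations are routine; the only step requiring genuine care is the first, namely the correct placement of the transpose and the independence argument that makes the cross term vanish, yielding the clean identity $\mathbb{E}[\mathbf{X}_{kh}\mathbf{X}_{(k+j)h}^{T}]=\mathbf{\Sigma}(\infty)e^{\mathbf{A}^{T}jh}$. Once this is in place, the diagonality of $\mathbf{\Sigma}(\infty)$ does the rest of the work, and the two-case split for the matrix exponential is elementary linear algebra.
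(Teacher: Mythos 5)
Your proof is correct, and it shares the paper's overall architecture: both reduce $\gamma_j$ to $\mathbf{\Sigma}_{11}(\infty)\,(e^{\mathbf{A}jh})_{11}$ via the identity $\mathbb{E}[\mathbf{X}_{kh}\mathbf{X}_{(k+j)h}^{T}]=\mathbf{\Sigma}(\infty)e^{\mathbf{A}^{T}jh}$, exploit the diagonality of $\mathbf{\Sigma}(\infty)$, and then treat the distinct-eigenvalue and double-root cases separately. Where you differ is in how the ingredients are obtained, and your choices are somewhat cleaner. (i) You actually prove the lag-covariance identity, by decomposing $\mathbf{X}_{(k+j)h}$ through the semigroup property and killing the cross term via independence and zero mean of the forward stochastic integral; the paper states this identity without proof. (ii) You obtain $\mathbf{\Sigma}(\infty)=\mathrm{diag}\bigl(\tfrac{\sigma^2}{2\alpha\gamma},\tfrac{\sigma^2}{2\gamma}\bigr)$ either from the factorized stationary density or from the Lyapunov equation $\mathbf{A}\mathbf{\Sigma}(\infty)+\mathbf{\Sigma}(\infty)\mathbf{A}^{T}=-\mathbf{e}\mathbf{e}^{T}$, whereas the paper evaluates $\lim_{t\to\infty}\int_0^t e^{\mathbf{A}u}\mathbf{e}\mathbf{e}^{T}e^{\mathbf{A}^{T}u}\,du$ explicitly after diagonalizing (or Jordan-reducing) $\mathbf{A}$; your route avoids that integral entirely and makes the diagonality of $\mathbf{\Sigma}(\infty)$ --- the structural fact that collapses $\gamma_j$ to a single matrix-exponential entry --- transparent rather than an outcome of computation. (iii) For $(e^{\mathbf{A}jh})_{11}$ you use the Lagrange--Sylvester spectral formula in the distinct-root case and the nilpotent splitting $\mathbf{A}=\lambda_0 I+\mathbf{N}$, $\mathbf{N}^2=0$, in the double-root case, while the paper again passes through the explicit eigenvector matrix $\mathbf{Q}$; these are equivalent, yours being coordinate-free. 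The net effect is the same result by the same skeleton, but your version is more self-contained and computationally lighter.
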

\begin{proof}
Let $\mathbf{\Gamma }(j):=\mathbb{E}[\mathbf{X}_{kh}\mathbf{X}_{(k+j)h}^{T}]=%
\mathbf{\Sigma }(\infty )e^{\mathbf{A}^{T}jh}$ for $j\geq 0$. Note that $\gamma _{j}=\mathbf{\Gamma }_{11}(j)$, i.e., $\gamma _{j}$ is the first
element of the matrix $\mathbf{\Gamma }(j)$. Then it follows that 
\begin{equation*}
\gamma _{0}=\mathbf{\Sigma }_{11}(\infty ),~\gamma _{j}=\left( \mathbf{\Sigma }(\infty )e^{\mathbf{A}^{T}jh}\right) _{11}.
\end{equation*}%
If $\gamma ^{2}-4\alpha \neq 0$, then $\mathbf{A}$ has two different
eigenvalues $\lambda _{1}$ and $\lambda _{2}$, and it can be written as 
\begin{equation*}
\mathbf{A}=\mathbf{Q\Lambda Q}^{-1}~\text{with}~\mathbf{Q=}\left( 
\begin{array}{cc}
1 & 1 \\ 
\lambda _{1} & \lambda _{2}%
\end{array}%
\right) ,\mathbf{\Lambda }=\left( 
\begin{array}{cc}
\lambda _{1} & 0 \\ 
0 & \lambda _{2}%
\end{array}%
\right) .
\end{equation*}%
The covariance matrix $\mathbf{\Sigma }(\infty )$\ can be computed as 
\begin{equation}
\mathbf{\Sigma }(\infty )=\lim_{t\rightarrow \infty }\int_{0}^{t}\mathbf{Q}%
e^{\mathbf{\Lambda }u}\mathbf{Q}^{-1}\mathbf{ee}^{T}\mathbf{Q}^{-T}e^{\mathbf{\Lambda }^{T}u}\mathbf{Q}^{T}du=\sigma ^{2}\left( 
\begin{array}{cc}
\frac{1}{2ab} & 0 \\ 
0 & -\frac{1}{2b}%
\end{array}%
\right) .  \label{Sigma}
\end{equation}%
This gives $\gamma _{0}=\mathbf{\Sigma }_{11}(\infty )=\frac{\sigma ^{2}}{2\gamma \alpha }$ and for $j>0$,
\begin{equation*}
\gamma _{j}=\mathbf{\Sigma }_{11}(\infty )\left( e^{\mathbf{A}^{T}jh}\right)
_{11}=\frac{1}{\lambda _{1}-\lambda _{2}}(\lambda _{1}e^{\lambda
_{2}jh}-\lambda _{2}e^{\lambda _{1}jh})\gamma (0).
\end{equation*}%

In the case $\gamma ^{2}-4\alpha =0$, $\mathbf{A}$ has a single eigenvalue $\lambda _{0}=-\frac{\gamma }{2}$, and it can be transformed to a Jordan block%
\begin{equation*}
\mathbf{A=Q\Lambda Q}^{-1}~\text{with}~\mathbf{Q=}\left( 
\begin{array}{cc}
1~ & 0 \\ 
\lambda _{0}~ & 1%
\end{array}%
\right) ,\mathbf{\Lambda }=\left( 
\begin{array}{cc}
\lambda _{0}~ & 1 \\ 
0~ & \lambda _{0}%
\end{array}%
\right) .
\end{equation*}%
This leads to the same $\mathbf{\Sigma }(\infty )$ as in (\ref{Sigma}).
Similarly, we have $\gamma _{0}=\frac{\sigma ^{2}}{2\gamma \alpha }$ and 
\begin{equation*}
\gamma _{j}=\mathbf{\Sigma }_{11}(\infty )\left( e^{\mathbf{A}^{T}jh}\right)
_{11}=e^{\lambda _{0}jh}(1-\lambda _{0}jh)\gamma _{0}.
\end{equation*}
\vspace{-2mm}
\end{proof}

\section{ARMA\ processes}
\label{section:arma}

We review the definition and computation of autocovariance function of ARMA\
processes in this subsection. For more details, we refer to \cite[Section 3.3]%
{BD91}.
\begin{definition}
\label{ARMAdef}The process $\left\{ X_{n},n\in \mathbb{Z}\right\} $ is said
to be an ARMA($p,q$) process if it is stationary process satisfying 
\begin{equation}
X_{n}-\phi _{1}X_{n-1}-\cdots -\phi _{p}X_{n-p}=W_{n}+\theta
_{1}W_{n-1}+\cdots +\theta _{q}W_{n-q},  \label{ARMApq}
\end{equation}%
for every $n$, where $\left\{ W_{n}\right\} $ are i.i.d $\mathcal{N}%
(0,\sigma _{W}^{2})$, and if the polynomials $\phi(z):=1-\phi_1 z - \dots - \phi_p z^p$ and $\theta(z) := 1+ \theta_1 z +\dots +\theta_q z^q$ have no common factors. If $\left\{ X_{n}-\mu \right\} $ is an ARMA($p,q$)
process, then $\left\{ X_{n}\right\} $ is said to be an ARMA($p,q$) process
with mean $\mu $. The process is causal if $\phi(z) \neq 0$ for all $|z|\leq 1$. The process is invertible if $\theta(z) \neq 0$ for all $|z|\leq 1$. 
\end{definition}

The autocovariance function $\left\{ \gamma (k)\right\} _{k=1}^{\infty }$ of
an ARMA($p,q$) can be computed from the following difference equations,
which are obtained by multiplying each side of (\ref{ARMApq}) by $X_{n-k}$
and taking expectations, 
\begin{eqnarray}
\gamma (k)-\phi _{1}\gamma (k-1)-\dots -\phi _{p}\gamma (k-p) &=&\sigma
_{W}^{2}\sum_{k\leq j\leq q}\theta _{j}\psi _{j-k},~~0\leq k< \max\{p,q+1\},
\label{gamma1} \\
\gamma (k)-\phi _{1}\gamma (k-1)-\dots -\phi _{p}\gamma (k-p)
&=&0,~~~~~~~~ k\geq \max\{p, q+1\},  \label{gamma_diff}
\end{eqnarray}%
where $\psi _{j}$ in (\ref{gamma1}) is computed as follows (letting $\theta
_{0}:=1$ and $\theta _{j}=0$ if $j>q$)

\begin{equation*}
\psi _{j}=\left\{ 
\begin{array}{cc}
{\theta _{j}+\sum_{0<k\leq j}\phi _{k}\psi _{j-k}},~ & \text{for }j<\max\{p,q+1\}; \\ 
{\sum_{0<k\leq p}\phi _{k}\psi _{j-k}}, & \text{for }j\geq \max\{p,q+1\}.%
\end{array}%
\right.  
\end{equation*}%
Denote $\left( \zeta _{i},i=1,\dots ,k\right) $ the distinct zeros of $\phi
(z):=1-\phi _{1}z-\cdots -\phi _{p}z^{p}$, and let $r_{i}$ be the
multiplicity of $\zeta _{i}$ (hence $\sum_{i=1}^{k}r_{i}=p$). The general
solution of the difference Eq.~(\ref{gamma_diff}) is 
\begin{equation}
\gamma (n)=\sum_{i=1}^{k}\sum_{j=0}^{r_{i}-1}\beta _{ij}n^{j}\zeta _{i}^{-n}%
\text{, for }n\geq \max\{p,q+1\}-p,  \label{gamma_roots}
\end{equation}%
where the $p$ constants $\beta _{ij}$ (and hence the values of $\gamma (j)$
for $0\leq j<\max\{p,q+1\}-p$ ) are determined from (\ref{gamma1}).

\begin{example}[ARMA($2,0$)]\textbf{.} \label{exmpl20}
{\rm For an ARMA(2,0) process $X_{n}-\phi
_{1}X_{n-1}-\phi _{2}X_{n-2}=W_{n}$, its autocovariance function is%
\begin{equation*}
\gamma (n)=\left\{ 
\begin{array}{cc}
{\beta _{1}\zeta _{1}^{-n}+\beta _{2}\zeta _{2}^{-n}},~ & \text{if }\phi
_{1}^{2}+4\phi _{2}\neq 0; \\ 
{\left( \beta _{1}+\beta _{2}n\right) \zeta ^{-n}}, & \text{if }\phi
_{1}^{2}+4\phi _{2}=0%
\end{array}%
\right.  \label{gammaAR2}
\end{equation*}%
for $n\geq 0$, where $\zeta _{1}$, $\zeta _{2}$ or $\zeta $ are the zeros of 
$\phi (z)=1-\phi _{1}z-\phi _{2}z^{2}$. The constants $\beta _{1}\,$and $\beta _{2}$ are computed from the equations%
\begin{eqnarray*}
\gamma (0)-\phi _{1}\gamma (1)-\phi _{2}\gamma (2) &=&\sigma _{W}^{2}, \\
\gamma (1)-\phi _{1}\gamma (0)-\phi _{2}\gamma (1) &=&0.
\end{eqnarray*}
}
\end{example}

\begin{example}[ARMA($2,1$)]\textbf{.} \label{exmpl21} 
{\rm
For an ARMA(2,1) process $X_{n}-\phi
_{1}X_{n-1}-\phi _{2}X_{n-2}=W_{n}+\theta _{1}W_{n-1}$, we have $\psi
_{0}=1,\psi _{1}=\phi _{1}$. Its autocovariance function is of the same form as that
in $(\ref{gammaAR2})$, where the constants $\beta _{1}\,$and $\beta _{2}$
are computed from the equations 
\begin{eqnarray*}
\gamma (0)-\phi _{1}\gamma (1)-\phi _{2}\gamma (2) &=&\sigma
_{W}^{2}(1+\theta _{1}^{2}+\theta _{1}\phi _{1}), \\
\gamma (1)-\phi _{1}\gamma (0)-\phi _{2}\gamma (1) &=&\sigma _{W}^{2}\theta
_{1}.
\end{eqnarray*}
}
\end{example}

\section{Numerical schemes for hypoelliptic SDEs with additive noise\label{schemeLE}}
Here we briefly review the two numerical schemes, the Euler-Maruyama scheme
and the \Ito-Taylor scheme of strong order 2.0, for hypoelliptic systems
with additive noise%
\begin{eqnarray*}
dx &=&ydt,  \label{GLE} \\
dy &=&a(x,y)dt+\sigma dB_{t},  \notag
\end{eqnarray*}%
where $a:\mathbb{R}^2\to \mathbb{R}$ satisfies suitable conditions so that the system is ergodic.

In the following, the step size of all schemes is $h$, and $W_{n}=\sigma 
\sqrt{h}\xi _{n},~Z_{n}=\sigma h^{3/2}\left( \xi _{n}+\eta_n /\sqrt{3}
\right) $, where $\left\{ \xi _{n}\right\} $ and $\left\{ \eta
_{n}\right\} $ are two i.i.d sequences of $\mathcal{N}(0,1)$ random
variables.  \\
\textbf{Euler-Maruyama }(EM)\textbf{:} \vspace{-2mm}
\begin{eqnarray}
x_{n+1} &=&x_{n}+y_{n}h,  \label{EMxy} \\
y_{n+1} &=&y_{n}+ha(x_{n},y_{n})+W_{n+1}.  \notag
\end{eqnarray}
\textbf{\Ito-Taylor scheme of strong order 2.0 }(IT2)\textbf{:}%
\begin{eqnarray}
x_{n+1} &=&x_{n}+hy_{n}+ 0.5h^{2}a\left( x_{n},y_{n}\right) +Z_{n+1},  \notag \\
y_{n+1} &=&y_{n}+ha\left( x_{n},y_{n}\right) + 0.5h^{2}\left[
a_{x}(x_{n},y_{n})y_{n}+\left( aa_{y}+ 0.5\sigma ^{2}a_{yy}\right)
(x_{n},y_{n})\right]  \label{IT2xy} \\
&&+W_{n+1}+a_{y}(x_{n},y_{n})Z_{n+1}+a_{yy}(x_{n},y_{n})\sigma ^{2}\frac{h}{6}(W_{n+1}^2-h).
\notag
\end{eqnarray}

The \Ito-Taylor scheme of order 2.0 can be derived as follows (see e.g.
Kloeden and Platen \cite{Hu96, KP99} ). The differential equation can be rewritten
in the integral form:\vspace{-2mm}
\begin{eqnarray*}
x_{t} &=&x_{t_{0}}+\int_{t_{0}}^{t}y_{s}ds, \\
y_{t} &=&y_{t_{0}}+\int_{t_{0}}^{t}a(x_{s},y_{s})ds+\sigma \left(
B_{t}-B_{t_{0}}\right) .
\end{eqnarray*}%
We start from the \Ito-Taylor expansion of $x:$
\begin{eqnarray*}
x_{t_{n+1}}
&=&x_{t_{n}}+hy_{t_{n}}+\int_{t_{n}}^{t_{n+1}}%
\int_{t_{n}}^{t}a(x_{s},y_{s})dsdt+\sigma I_{10}^{n+1}  \notag \\
&=&x_{t_{n}}+hy_{t_{n}}+ 0.5h^{2}a(x_{t_{n}},y_{t_{n}})+\sigma
I_{10}^{n+1}+O(h^{5/2}),  \label{IT2_x}
\end{eqnarray*}%
where $I_{10}^{n+1}:=\int_{t_{n}}^{t_{n+1}}\left( B_{t}-B_{t_{n}}\right)
dt$. To get higher order scheme for $y$, we apply \Ito's chain rule to
$a(x_{t},y_{t})$:
\begin{equation*}
a\left( x_{t},y_{t}\right) =a\left( x_{s},y_{s}\right) +\int_{s}^{t}[
a_{x}(x_{r},y_{r})y_{r}+( aa_{y}+0.5\sigma ^{2}a_{yy})
(x_{r},y_{r})] dr+\sigma \int_{s}^{t}a_{y}\left( x_{r},y_{r}\right)
dB_{r}.
\end{equation*}%
This leads to \Ito-Taylor expansion for $y$ (up to the order 2.0):%
\begin{eqnarray*}
y_{t_{n+1}} &=&y_{t_{n}}+\int_{t_{n}}^{t_{n+1}}a(x_{s},y_{s})ds+\sigma
\left( B_{t_{n+1}}-B_{t_{n}}\right)  \notag \\
&=&y_{t_{n}}+ha\left( x_{t_{n}},y_{t_{n}}\right) +\sigma \left(
B_{t_{n+1}}-B_{t_{n}}\right)  +a_{y}(x_{t_{n}},y_{t_{n}})\sigma
I_{10}^{n+1}+a_{yy}(x_{t_{n}},y_{t_{n}})\sigma ^{2}I_{110}^{n+1} \notag \\
&&+ 0.5h^{2}[ a_{x}(x_{t_{n}},y_{t_{n}})y_{t_{n}}+( aa_{y}+0.5
\sigma ^{2}a_{yy}) (x_{t_{n}},y_{t_{n}})]  +O\left(
h^{5/2}\right),  \notag  
\end{eqnarray*}%
where  $I_{110}^{n+1}=\int_{t_{n}}^{t_{n+1}}\int_{t_{n}}^{t}\left(B_{s}-B_{t_{n}}\right) dB_{s}dt$.
Representing $\sigma \left(B_{t_{n+1}}-B_{t_{n}}\right) $, $\sigma I_{10}^{n+1}$ and $I_{110}^{n+1}$ by $W_{n+1}$, $Z_{n+1}$ and $\frac{h}{6}(W_{n+1}^2-h)$ respectively, we obtain the scheme (\ref{IT2xy}).

\bibliographystyle{plain}
\bibliography{references}

\end{document}